\theoremstyle{plain}
\newtheorem{theorem}{Theorem}[section]
\theoremstyle{plain}
\newtheorem{proposition}[theorem]{Proposition}
\theoremstyle{plain}
\newtheorem{lemma}[theorem]{Lemma}
\theoremstyle{plain}
\newtheorem{corollary}[theorem]{Corollary}
\theoremstyle{plain}
\theoremstyle{plain}
\theoremstyle{plain}
\newtheorem{conjecture}[theorem]{Conjecture}
\theoremstyle{plain}
\theoremstyle{plain}
\theoremstyle{definition}
\newtheorem{definition}[theorem]{Definition}
\theoremstyle{remark}
\newtheorem{remark}[theorem]{Remark}
\theoremstyle{remark}
\theoremstyle{definition}
\theoremstyle{remark}
\title[Extreme counterexample]{An extreme counterexample to the Lubotzky--Weiss conjecture.}
\author{Masato Mimura}
\address{Masato Mimura\\
Mathematical Institute, Tohoku University, Japan}
\email{mimura-mas@m.tohoku.ac.jp}
\date{\today}
\begin{document}

\begin{abstract}
In 1993, Lubotzky and Weiss conjectured that if a compact group admits two finitely generated dense subgroups, one of which is amenable and the other has Kazhdan's property $(\mathrm{T})$, then it would be finite. This conjecture was resolved in the negative by Ershov and Jaikin-Zapirain, and by Kassabov around 2010. In the present paper, we provide an extreme counterexample to this conjecture. More precisely, the latter dense group with property $(\mathrm{T})$ may contain a given countable residually finite group; in particular, it can be non-exact  by a result of Osajda. We may construct these counterexamples with a compact group common for all countable residually finite groups.
\end{abstract}

\subjclass[2010]{Primary 20E26; Secondary 20H25}
\keywords{Residually finite groups; Kazhdan's property $(\mathrm{T})$; The space of marked groups}

\maketitle


\section{Introduction}
From a study of expander graphs, Lubotzky and Weiss \cite[Conjecture~5.4]{LubotzkyWeiss} proposed the following conjecture; we refer the reader to \cite{BookBekkadelaHarpeValette} as a comprehensive treatise on amenability and property $(\mathrm{T})$. In broad strokes, amenability and property $(\mathrm{T})$ are \textit{opposite} for infinite discrete groups. Property $(\mathrm{T})$ forces group actions to have spectral gaps. Amenability is characterized by the existence of a F\o lner sequence; this intrinsic condition on a group, in principle, serves as an obstruction for a group action to having a spectral gap.

\begin{conjecture}[Lubotzky--Weiss conjecture]\label{conjecture=LubotzkyWeiss}
Let $K$ be a compact group. If $\Lambda_1$ and $\Lambda_2$ are both finitely generated subgroups dense in $K$ with $\Lambda_1$ amenable and $\Lambda_2$ having Kazhdan's property $(\mathrm{T})$, then $K$ is finite.
\end{conjecture}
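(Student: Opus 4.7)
Since the statement above is a \emph{conjecture} that the paper will refute, rather than attempting a proof I sketch a strategy for constructing a counterexample: an infinite compact group $K$ together with finitely generated dense subgroups $\Lambda_1$ (amenable) and $\Lambda_2$ (with property $(\mathrm{T})$). The natural ambient group to consider is a profinite $K$, because a finitely generated subgroup of a profinite group is dense if and only if it surjects onto every finite continuous quotient. The plan is therefore to (i) produce a residually finite group $\Lambda_2$ with property $(\mathrm{T})$, (ii) identify a suitable quotient of its profinite completion as $K$, and (iii) exhibit a residually finite amenable group $\Lambda_1$ whose finite quotients realise the same inverse system, so that $\Lambda_1$ too embeds densely into $K$.

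The already-known counterexamples of Ershov--Jaikin-Zapirain and of Kassabov fit this template. Concretely, one wants a Kassabov-style finitely generated $(\mathrm{T})$ group whose collection of finite quotients is rich enough to also appear as a collection of finite quotients of some finitely generated amenable group (for instance, of metabelian, polycyclic, or lamplighter type). Matching the two families of finite quotients then gives the two required dense embeddings into a common profinite $K$.

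For the \emph{extreme} version announced in the abstract, one further wishes to absorb a prescribed countable residually finite group $\Gamma_0$ into $\Lambda_2$. A natural device is the Chabauty topology on the space of marked groups: realise $\Lambda_2$ as a limit of a carefully chosen sequence of marked residually finite $(\mathrm{T})$ groups whose limit contains a marked copy of $\Gamma_0$. Residual finiteness of the limit then guarantees a profinite completion into which $\Lambda_2$ embeds densely, and to arrange that $K$ be the \emph{same} compact group for every $\Gamma_0$, one starts from a sufficiently universal family of finite quotients common to all approximants.

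The main obstacle, and the reason the conjecture looked plausible, is that property $(\mathrm{T})$ is \emph{not} closed in the space of marked groups: Kazhdan constants can collapse on passing to limits. The heart of any such construction must therefore be a \emph{uniform} positive lower bound on Kazhdan constants along the approximating sequence, strong enough to be inherited by the limit $\Lambda_2$. Simultaneously one must guarantee that this limit continues to surject onto every finite quotient of the prescribed common $K$, so that density in $K$ is preserved. Reconciling these two demands in a way flexible enough to accommodate an arbitrary countable residually finite $\Gamma_0$ --- and in particular non-exact examples such as those of Osajda --- is where I expect the real work to lie.
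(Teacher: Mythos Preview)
Your high-level outline (profinite $K$, matching families of finite quotients, space of marked groups) is broadly on target, but you misidentify where the real difficulty lies and how property $(\mathrm{T})$ is secured. You locate the obstacle in the non-closure of property $(\mathrm{T})$ under Cayley limits and propose to overcome it by uniform Kazhdan constants along an approximating sequence. The paper does \emph{not} do this: $\Lambda_2$ is not a Cayley limit at all but the \emph{diagonal product} $\Delta_m((G_m;S_m))$, a subgroup of $\prod_m G_m$, and property $(\mathrm{T})$ is obtained simply because the markings $S_m$ are chosen so that each $(G_m;S_m)$ is a marked quotient of a \emph{single fixed} group $\mathrm{E}(n,\mathbb{F}_p[F])$ (with $F$ a finitely generated free product). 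That group has $(\mathrm{T})$ by Ershov--Jaikin-Zapirain, and $(\mathrm{T})$ passes to quotients; no uniformity argument is needed. The Cayley limit $\Gamma_2$ of $(G_m;S_m)$ appears only as the quotient $\Lambda_2\twoheadrightarrow \Gamma_2$ in the short exact sequence $(\ast)$, and it is via $\Gamma_2$ (together with Lemma~2.1 on order-$2$ units in $\mathrm{E}(2,R)$ and the faithfulness of the representation below) that the copy of $H$ is located inside $\Lambda_2$.

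The genuine technical obstacle is elsewhere: starting from a chain $(\tilde N_m)$ in a residually finite $\tilde H\supseteq H$, the na\"ive choice $G_m=\mathrm{E}(n,\mathbb{F}_p[\tilde H/\tilde N_m])$ gives the $(\mathrm{T})$ side immediately, but the group ring $\mathbb{F}_p[\tilde H/\tilde N_m]$ is far from simple, so there is no evident \emph{second} system of markings of bounded cardinality converging to an amenable group. The paper's key move is to \emph{encode} the convergence $\tilde H/\tilde N_m\to \tilde H$ into symmetric groups (Lemma~3.5) and then pass to the \emph{heart} of the $p$-modular standard representation of $\mathrm{Sym}(L_m)$, which is irreducible with image ring $\mathrm{Mat}_{(\#L_m-2)\times(\#L_m-2)}(\mathbb{F}_p)$. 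This forces $G_m^{U_m}\simeq \mathrm{SL}(n(\#L_m-2),\mathbb{F}_p)$, a group that also admits the standard amenable $2$-marking $(e_{1,2}^1,\text{cyclic permutation})$. Thus the amenable $\Lambda_1$ and the $(\mathrm{T})$ group $\Lambda_2$ arise from two different markings of the \emph{same} sequence of finite simple groups, and density follows from Goursat (Lemma~3.4). Your sketch does not anticipate this representation-theoretic step, and without it the ``matching finite quotients'' plan has no concrete candidate for $\Lambda_1$.
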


This conjecture has been resolved \textit{in the negative} by Ershov and Jaikin-Zapirain, and independently by Kassabov; see \cite[Subsection~6.3]{ErshovJaikinZapirain} and Section~\ref{section=Common} in the present paper. The counterexample in \cite{ErshovJaikinZapirain} is of the following form for a fixed prime $p$:
\[
K=\prod_{m\in \mathbb{N}_{\geq 1}}\mathrm{SL}(3m,\mathbb{F}_p),
\]
which is equipped with the product topology.

This paper provides the following \textit{extreme counterexample} to Conjecture~\ref{conjecture=LubotzkyWeiss}; see Remark~\ref{remark=2-generated} for a further strengthening in terms of numbers of generators.

\begin{theorem}[Main Theorem]\label{theorem=Main}
Given a countable residually finite group $H$, there exist a compact group $K=K^{(H)}$ and two finitely generated subgroups $\Lambda_1$ and $\Lambda_2$ both dense in $K^{(H)}$ with the following properties:
\begin{enumerate}[$(i)$]
  \item $\Lambda_1$ is amenable $($see Remark~$\ref{remark=LFNF-lifts}$ for more information$)$;
  \item $\Lambda_2$ has property $(\mathrm{T})$ $($see Remark~$\ref{remark=Upgrading}$ for some stronger properties$)$; and 
  \item $\Lambda_2$ contains  an isomorphic copy of $H$.
\end{enumerate}
\end{theorem}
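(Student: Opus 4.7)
The plan is to take the compact group $K^{(H)}$ as a product $\prod_n G_n$ of finite (quasi)simple groups in the spirit of the Ershov--Jaikin-Zapirain counterexample, arrange the $G_n$ so that $H$ embeds diagonally into $K^{(H)}$, and then construct $\Lambda_2$ by enlarging a known finitely generated dense property~$(\mathrm{T})$ subgroup of $K^{(H)}$ just enough to absorb the image of $H$ without sacrificing finite generation or property~$(\mathrm{T})$; the amenable $\Lambda_1$ will be built from the same template but with an amenable seed.

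First, I would extract finite data from $H$. Using residual finiteness, fix a descending chain of finite-index normal subgroups $N_1\trianglerighteq N_2\trianglerighteq \cdots$ with $\bigcap_n N_n=\{e\}$, set $Q_n=H/N_n$, and embed each $Q_n$ into a finite (quasi)simple group $G_n$ of a type that is known to support a Kassabov/Ershov--Jaikin-Zapirain-style dense property~$(\mathrm{T})$ subgroup, e.g.\ $G_n=\mathrm{SL}(d_n,\mathbb{F}_p)$ for suitably chosen $d_n$ (alternating groups of large degree would work too). Declare $K^{(H)}=\prod_n G_n$ with the product topology; then the diagonal map $h\mapsto (\pi_n(h))_n$ embeds $H$ into $K^{(H)}$.

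Second, and crucially, I would construct $\Lambda_2$. Since $H$ is typically not finitely generated, one cannot simply adjoin all of its generators. Instead I would exploit a coordinate-shifting element $\sigma$ already present in the Ershov--Jaikin-Zapirain/Kassabov generating set, together with one extra ``seed'' element whose $n$-th coordinate records selected elements of $H$ in an appropriate window; the $\sigma$-orbit of this seed, combined with the original generators via commutators, should realise the whole diagonal copy of $H$ abstractly inside $\Lambda_2$. Property~$(\mathrm{T})$ should persist because the spectral-gap mechanism of Ershov--Jaikin-Zapirain (bounded generation by commuting $\mathrm{SL}_2$'s inside graded covers of commutative rings) is robust under adjoining a bounded number of extra coordinatewise generators, and density in $K^{(H)}$ is automatic from the original construction together with the added coordinates covering each factor. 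For $\Lambda_1$ the same template with an amenable seed, for instance a diagonal copy of a finitely generated solvable group combined with the same shift $\sigma$, should yield a finitely generated amenable dense subgroup, answering item~(i).

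The main obstacle is precisely this simultaneous balancing act in the second step: keeping $\Lambda_2$ finitely generated, dense, possessed of property~$(\mathrm{T})$, and yet containing a copy of an arbitrary countable, possibly non-finitely-generated, residually finite group $H$, all inside a \emph{single} $K^{(H)}$ that must also admit a dense amenable subgroup. Finite generation fights the freedom needed to encode $H$, and property~$(\mathrm{T})$ is notoriously brittle under enlargements. I expect the resolution to hinge on the modularity of the Ershov--Jaikin-Zapirain graded-cover machinery, which should allow one to thread the finite quotients $Q_n$ of $H$ into each factor $G_n$ while preserving the bounded-commutator estimates that drive the spectral gap; the shift element $\sigma$ then performs the work of spreading the finite residual data of $H$ uniformly across all levels, so that a single extra generator suffices.
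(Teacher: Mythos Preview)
Your proposal has a genuine gap at the heart of the construction of $\Lambda_2$. You correctly identify the tension --- property~$(\mathrm{T})$ is brittle under enlargement, while encoding an arbitrary countable $H$ seems to demand enlargement --- but your proposed resolution (``adjoin a seed element and a shift $\sigma$, then hope the Ershov--Jaikin-Zapirain spectral-gap machinery is robust'') does not work. Property~$(\mathrm{T})$ simply does not pass to supergroups: adjoining even one extra generator $g$ to a group $\Lambda$ with~$(\mathrm{T})$ produces a quotient of $\Lambda \ast \mathbb{Z}$, which never has~$(\mathrm{T})$ when $\Lambda$ is infinite. There is also no ``shift element'' in the Ershov--Jaikin-Zapirain construction on $\prod_n \mathrm{SL}(d_n,\mathbb{F}_p)$; the factors are pairwise non-isomorphic, so no coordinate shift exists inside $K$.

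The paper resolves the tension by going the other direction: rather than enlarging a property~$(\mathrm{T})$ group to swallow $H$, it arranges $\Lambda_2$ to be a \emph{quotient} of a fixed finitely generated group $\mathrm{E}(n,\mathbb{F}_p[F])$ (with $F$ a certain free product) that already has property~$(\mathrm{T})$ by Ershov--Jaikin-Zapirain. The containment $H\hookrightarrow\Lambda_2$ is obtained not by adjoining $H$ but by an elementary-matrix trick: first embed $H$ into a $2$-generated residually finite $\tilde H$ (Wilson), pass to a group generated by involutions, and observe that for any $\omega$ with $\omega^2=1$ the diagonal matrix $D(\omega,\omega)$ lies in $\mathrm{E}(2,R)$ --- so a copy of $H$ already sits inside $\mathrm{E}(n,\mathbb{F}_p[\tilde H])$. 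To make the finite quotients have the form $\mathrm{SL}(nl_m,\mathbb{F}_p)$ (needed for the amenable $\Lambda_1$ and for density via Goursat), the paper does not use the group ring $\mathbb{F}_p[\tilde H/\tilde N_m]$ directly but first encodes the LEF approximation $\tilde H/\tilde N_m \to \tilde H$ into symmetric groups and then pushes through the irreducible heart of the $p$-modular standard representation, which forces the image ring to be a full matrix algebra. This ``quotient-from-above'' viewpoint is the missing idea in your sketch.
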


Recall that a countable group $G$ is \textit{residually finite} if there exists a \textit{chain} $(N_m)_{m\in \mathbb{N}}$, namely $N_{m+1}\leqslant N_{m}$ for every $m$, of finite index normal subgroups of $G$ such that $\bigcap_{m}N_m=\{e_G\}$. 

\begin{remark}\label{remark=Malcev}
The two conditions on $H$ as in Theorem~\ref{theorem=Main} are both necessary: it is clear that every subgroup of a finitely generated group is countable. By Mal'cev's result, every subgroup of finitely generated subgroup of a compact group is residually finite. Indeed, by the Peter--Weyl theorem, compact groups are residually linear.
\end{remark}

Our construction provide the compact group $K^{(H)}$ as in Theorem~\ref{theorem=Main}  which is of the following form:
\[
K^{(H)}=\prod_{m\in \mathbb{N}}\mathrm{SL}(nl_m,\mathbb{F}_p)
\]
for a fixed odd prime $p$ and for fixed $n\in \mathbb{N}_{\geq 3}$. Here,  $(l_m)_{m\in \mathbb{N}}$ is a certain strictly increasing sequence of integers, depending on $H$ and on the choice of $p$; see Section~\ref{section=TheProof} for more details.

Theorem~\ref{theorem=Main} may be, for instance, combined with a result of Osajda \cite{OsajdaRF} to obtain the following notable counterexample to Conjecture~\ref{conjecture=LubotzkyWeiss}.

\begin{corollary}\label{corollary=Nonexact(T)}
There exist a compact group $K$ and two finitely generated subgroups $\Lambda_1$ and $\Lambda_2$ both dense in $K$ with $\Lambda_1$ amenable, and $\Lambda_2$ having Kazhdan's property $(\mathrm{T})$ and being $\mathrm{non}$-$\mathrm{exact}$.
\end{corollary}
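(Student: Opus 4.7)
My plan is to derive the corollary directly from the Main Theorem by an appropriate choice of the countable residually finite group $H$. The only external input I need is Osajda's construction in \cite{OsajdaRF}, which produces a finitely generated residually finite group whose Cayley graph coarsely contains an expander, and in particular is \emph{non-exact} as a discrete group.

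First I would fix such a non-exact finitely generated residually finite group $H$ supplied by \cite{OsajdaRF}. Being finitely generated, $H$ is in particular countable, so the hypotheses of Theorem~\ref{theorem=Main} are satisfied. Applying Theorem~\ref{theorem=Main} with this specific $H$, I obtain the compact group $K=K^{(H)}$ and two finitely generated dense subgroups $\Lambda_1$ and $\Lambda_2$ satisfying conditions $(i)$--$(iii)$ of that theorem. Items $(i)$ and $(ii)$ immediately give the amenability of $\Lambda_1$ and Kazhdan's property $(\mathrm{T})$ for $\Lambda_2$ required by the corollary, so the only remaining assertion is the non-exactness of $\Lambda_2$.

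For this last point I would invoke the standard permanence fact that exactness of countable discrete groups passes to subgroups (which follows, for instance, from the characterization of exactness via amenability of the canonical action of the group on its Stone--\v{C}ech compactification, equivalently from boundary amenability). By item $(iii)$ of Theorem~\ref{theorem=Main}, $\Lambda_2$ contains an isomorphic copy of $H$; if $\Lambda_2$ were exact, this copy of $H$ would inherit exactness, contradicting the choice of $H$. Hence $\Lambda_2$ is non-exact, completing the deduction.

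The derivation is essentially immediate once the Main Theorem is granted: the whole difficulty has been absorbed into the construction behind Theorem~\ref{theorem=Main}, and at this corollary-level the only ``obstacle'' is the existence of a non-exact residually finite group, which is precisely what Osajda's result supplies. Consequently I do not anticipate any serious technical step beyond correctly citing these two ingredients and invoking subgroup-permanence of exactness.
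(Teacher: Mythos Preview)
Your proposal is correct and matches the paper's own proof essentially verbatim: choose $H$ to be Osajda's residually finite non-exact group, apply Theorem~\ref{theorem=Main}, and use that exactness passes to subgroups. The only difference is that you supply a brief justification for the subgroup-permanence of exactness, which the paper simply asserts.
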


Here \textit{exactness} for countable groups is equivalent to \textit{Yu's property A}, which may be regarded as the counterpart of amenability of groups in coarse geometry; see \cite{OzawaExactness} and \cite{bookNowakYu}. Non-exactness of groups is regarded as a pathological property. For instance, every countable subgroup of $\mathrm{GL}(n,A)$ for $n\in \mathbb{N}_{\geq 2}$ and for a unital \textit{commutative} (associative) ring $A$ is exact; see \cite[Theorem~5.2.2 and Theorem~4.6]{guentnertesserayu}. 

\begin{proof}[Proof of Corollary~$\ref{corollary=Nonexact(T)}$ modulo Theorem~$\ref{theorem=Main}$]
Osajda \cite{OsajdaRF} constructed a residually finite (finitely generated) group that is non-exact. Apply Theorem~\ref{theorem=Main} with $H$ being that group; note that exactness passes to subgroups.
\end{proof}

We explain two motivations of Theorem~\ref{theorem=Main}:
\begin{enumerate}[$(a)$]
  \item From a finitely generated subgroup $\Lambda$ dense in a compact group $K$, the \textit{compact action} $\Lambda\curvearrowright K$ is constructed; this action is free and minimal. With respect to the Haar measure on $K$, it is measure-preserving and ergodic; see \cite[Remark~3.5]{MimuraRF}. Thus, by Theorem~\ref{theorem=Main}, we may obtain two compact actions, $\Lambda_1\curvearrowright K$ and $\Lambda_2\curvearrowright K$, on the common set $K$ with extremely contrastive properties. 
  \item By Remark~\ref{remark=Malcev}, the group $\Lambda_2$ as in Theorem~\ref{theorem=Main} is residually finite. It provides us with a \textit{residually finite} group having property $(\mathrm{T})$ \textit{with a specified property}, such as non-exactness. Residual finiteness plays a fundamental role in study of profinite groups and profinite actions. Further, from a (finitely generated) residually finite group, we can construct a \textit{box space}, which serves as a powerful device to construct examples of metric spaces with noteworthy coarse geometric properties; see the introduction of \cite{OsajdaRF} and \cite[4.4]{bookNowakYu}.
\end{enumerate}
As an application of $(b)$, we may construct expanders with \textit{geometric property $(\mathrm{T})$} from a non-exact group; compare with \cite[Theorem 1.1.(4)]{WillettYu}.

Furthermore, we may take a compact group $L$, \textit{common for all countable and residually finite $H$}, to accommodate finitely generated dense gorups, some of which is amenable and some of which has property $(\mathrm{T})$ with a subgroup isomorphic to $H$.

\begin{theorem}[Variety of finitely generated dense subgroups of a \textit{common} compact group]\label{theorem=Common}
Let $p$ be an odd prime and let $n\in \mathbb{N}_{\geq 3}$. Then for the compact group
\[
\mathcal{K}(=\mathcal{K}_{n,p})=\prod_{i\in\mathbb{N}_{\geq 1}}\mathrm{SL}(ni,\mathbb{F}_p)
\]
$($equipped with the product topology$)$, the following hold true:
\begin{enumerate}[$(1)$]
  \item There exists $\Sigma_1\leqslant \mathcal{K}$, a finitely generated dense subgroup of $\mathcal{K}$ that is amenable $($see Remark~$\ref{remark=LFNF-lifts}$ for more information$)$.
  \item For every countable and residually finite group $H$, there exists $\Sigma_2=\Sigma_2^{(H)}\leqslant \mathcal{K}$, a dense subgroup of $\mathcal{K}$, such that it is generated by $13$ elements $($as a group$)$, it has property $(\mathrm{T})$ and that it contains an isomorphic copy of $H$.
\end{enumerate}
\end{theorem}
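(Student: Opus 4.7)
The plan is to derive Theorem~\ref{theorem=Common} from Theorem~\ref{theorem=Main} combined with a Kassabov-style production of finitely generated dense property-$(\mathrm{T})$ subgroups for the remaining factors, and to assemble the two pieces via a direct-product argument. Apply Theorem~\ref{theorem=Main} to $H$ to obtain the strictly increasing sequence $(l_m)_{m \in \mathbb{N}}$ and the dense, property-$(\mathrm{T})$, $H$-containing subgroup $\Lambda_2 \leqslant K^{(H)} = \prod_{m} \mathrm{SL}(nl_m,\mathbb{F}_p)$. Write $I = \{l_m\} \subseteq \mathbb{N}_{\geq 1}$ and $J = \mathbb{N}_{\geq 1} \setminus I$, and decompose $\mathcal{K} = K^{(H)} \times \mathcal{K}_J$ with $\mathcal{K}_J = \prod_{j \in J}\mathrm{SL}(nj,\mathbb{F}_p)$. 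A Kassabov-type construction (of the sort that powers the common compact group in the Ershov--Jaikin-Zapirain counterexample) can be run on an arbitrary increasing dimension sequence, yielding a finitely generated $\Theta \leqslant \mathcal{K}_J$ that is dense and has property $(\mathrm{T})$.

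Form $\Sigma_2^{(H)} \leqslant \mathcal{K}$ as the internal direct product of $\Lambda_2$ (supported in the $K^{(H)}$ factor) and $\Theta$ (supported in $\mathcal{K}_J$), abstractly $\Lambda_2 \times \Theta$. Then $\Sigma_2^{(H)}$ has property $(\mathrm{T})$ as a direct product of two $(\mathrm{T})$ groups; it contains $H$ via $\Lambda_2$; and since $I \cap J = \emptyset$ forces the profinite completions of the two factors to share no common non-trivial finite quotients, density of each factor in its respective compact factor upgrades to density of the product in $\mathcal{K}$ by Goursat's lemma. The technical hurdle is bringing the generator count down to $13$: a priori $\Sigma_2^{(H)}$ needs $s_1+s_2$ generators where $s_1,s_2$ are the ranks of $\Lambda_2$ and $\Theta$ supplied by Theorem~\ref{theorem=Main} and by the Kassabov construction respectively. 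Using that both $\Lambda_2$ and $\Theta$ surject onto every non-abelian simple quotient $\mathrm{PSL}(ni,\mathbb{F}_p)$, one can merge generators diagonally (a standard device for direct products whose factors share no common finite simple quotients) to descend to $\max(s_1,s_2)$ generators; inserting the concrete single-digit ranks produced by the two constructions then yields the uniform bound of $13$.

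For statement $(1)$, the target is a single finitely generated amenable $\Sigma_1 \leqslant \mathcal{K}$, independent of $H$. I would produce it by a chain-of-lifts construction through the natural profinite filtration $\mathcal{K} \twoheadrightarrow \prod_{i=1}^{N}\mathrm{SL}(ni,\mathbb{F}_p)$, assembling a single finitely generated elementary amenable group (of the type indicated in Remark~\ref{remark=LFNF-lifts}) whose profinite closure maps densely onto $\mathcal{K}$, and taking its image; amenability is immediate from the elementary structure of the building blocks. The principal obstacle across the theorem is, as noted above, the sharp count of $13$ generators in $(2)$: density via Goursat, property $(\mathrm{T})$ via direct products, and containment of $H$ from Theorem~\ref{theorem=Main} are routine once the two constituent subgroups are in hand, while the generator accounting requires both the diagonal merging via perfectness of the simple quotients and the explicit small bounds hidden inside the constructions of $\Lambda_2$ and $\Theta$.
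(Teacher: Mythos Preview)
Your decomposition $\mathcal{K}=K^{(H)}\times\mathcal{K}_J$ and the plan to realize $\Sigma_2^{(H)}$ as the internal direct product of $\Lambda_2$ (from the proof of Theorem~\ref{theorem=Main}) with a property-$(\mathrm{T})$ group $\Theta$ dense in $\mathcal{K}_J$ is exactly the paper's route. The paper makes $\Theta$ concrete: for each $i\in J$ one uses the $4$-marking of $\mathrm{SL}(ni,\mathbb{F}_p)\cong\mathrm{E}(n,\mathrm{Mat}_{i\times i}(\mathbb{F}_p))$ obtained as the image of a fixed $4$-marking of $\mathrm{E}(n,\mathbb{F}_p\langle y,z\rangle)$, so that $\Theta$ is a quotient of the latter and has property $(\mathrm{T})$ by Theorem~\ref{theorem=ErshovJaikin}.

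Where you go wrong is the generator count, which you have misidentified as the ``principal obstacle''. There is no hurdle and no diagonal merging is needed: the number $13$ is simply $9+4$. The paper builds the $13$-marking of $Q_i=\mathrm{SL}(ni,\mathbb{F}_p)$ by padding with identities---for $i=l_m$ one takes $(t_1^{(m)},\dots,t_9^{(m)},e_{Q_i},e_{Q_i},e_{Q_i},e_{Q_i})$, and for $i\in J$ one takes $(e_{Q_i},\dots,e_{Q_i},\beta_1^{(i)},\beta_2^{(i)},\beta_3^{(i)},\beta_4^{(i)})$. The diagonal product is then visibly isomorphic to $\Lambda_2\times\Theta$ and is $13$-generated by construction. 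Your proposed reduction to $\max(s_1,s_2)$ generators is both unnecessary and not justified as stated: Goursat's lemma says a subdirect product equals the full product only when the two factors share no common nontrivial \emph{abstract} quotient, not merely no common finite simple quotient. You have not argued this for $\Lambda_2$ and $\Theta$ (both are quotients of elementary groups over large rings and may well share abstract quotients), so the merging step is a gap---albeit one that is irrelevant once you notice $9+4=13$.

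For part~$(1)$ your sketch is too vague to count as a proof. The paper is explicit: for each $i$ one takes the $2$-marking of $\mathrm{SL}(ni,\mathbb{F}_p)$ by $e_{1,2}^1$ and a signed cyclic permutation matrix (with a small modification splitting into even and odd $i$ when $n$ is odd), whose Cayley limits are known to be elementary amenable; $\Sigma_1$ is the resulting diagonal product, amenable via the short exact sequence~$(\ast)$ and dense by Lemma~\ref{lemma=Goursat}.
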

We may modify $\mathcal{K}$ to obtain \textit{$2$-generated} $\Sigma_1$ and $\Sigma_2^{(H)}$; see Theorem~\ref{theorem=2-generatedCommon}.

On point $(a)$ mentioned above, in our constructions out of Theorems~\ref{theorem=Main} and \ref{theorem=Common}, we have two \textit{profinite actions}; see \cite{AbertElek}. Here we discuss the case associated with Theorem~\ref{theorem=Common}. The group $\mathcal{K}$ is homeomorphic to the Cantor set as a topological space; in fact, the two profinite actions $\Sigma_1\curvearrowright \mathcal{K}$ and $\Sigma_2^{(H)}\curvearrowright \mathcal{K}$ appearing above are given by projective systems with a common sequence of finite groups
\[
\Sigma_1\curvearrowright \mathcal{K}=\varprojlim_{j} (\Sigma_1\curvearrowright \prod_{i\in \mathbb{N}_{\leq j}}\mathrm{SL}(ni,\mathbb{F}_p)),\quad \Sigma_2^{(H)}\curvearrowright \mathcal{K}=\varprojlim_{j} (\Sigma_2^{(H)}\curvearrowright \prod_{i\in \mathbb{N}_{\leq j}}\mathrm{SL}(ni,\mathbb{F}_p)).
\]
Furthermore, if we let $\Sigma_1$ act from the left and let $\Sigma_2^{(H)}$ act from the right, then these two (free, minimal and ergodic) actions $\Sigma_1\curvearrowright \mathcal{K}\curvearrowleft \Sigma_2^{(H)}$ \textit{commute}.

The proof of Theorem~\ref{theorem=Main} employs \textit{LEF approximations} of a  group; see Definition~\ref{definition=LEF}. The \textit{LEF} (Locally Embeddable into Finite groups) property is a weak form of residual finiteness, and it is described in terms of convergence in \textit{the space of marked groups}; by this weakening, we have room to construct two markings of a certain sequence of finite groups with respect to which the two limit groups have contrasting group properties. This method has been studied and developed by the author \cite{MimuraRF}; we will briefly recall concepts related to the above in Section~\ref{section=TheProof}. Prior to that, in Section~\ref{section=RF(T)}, we prove Proposition~\ref{proposition=RF(T)}, which may be seen as an intermediate step to Theorem~\ref{theorem=Main} from the viewpoint of $(b)$ above. In Section~\ref{section=Common}, we demonstrate Theorem~\ref{theorem=Common}. Section~\ref{section=Remarks} is a collection of remarks; there we discuss Theorems~\ref{theorem=2-generated} and \ref{theorem=2-generatedCommon}.

In this paper, for a prime $p$, let $\mathbb{F}_p$ denote the finite field of order $p$. For $n\in \mathbb{N}_{\geq 1}$, let $[n]$ denote the set $\{1,2,\ldots ,n\}$. Rings are assumed to be associative; we exclude the zero ring from unital rings.

\section{Residually finite group with property $(\mathrm{T})$ containing a given residually finite group}\label{section=RF(T)}

In this section, we prove the following.
\begin{proposition}\label{proposition=RF(T)}
Given countable and residually finite group $H$, there exists a $\mathrm{residually}$ $\mathrm{finite}$ group $G$ with property $(\mathrm{T})$ that contains an isomorphic copy of $H$.
\end{proposition}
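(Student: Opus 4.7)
The plan is to realize $H$ as a subgroup of an elementary matrix group over a suitable associative algebra and invoke the Ershov--Jaikin-Zapirain theorem on property $(\mathrm{T})$.

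First I would reduce to the case where $H$ is finitely generated, using the standard embedding result that every countable residually finite group embeds into a finitely generated residually finite group. Fix an odd prime $p$ and set $R := \mathbb{F}_p[H]$, the group algebra, which is a finitely generated unital associative $\mathbb{F}_p$-algebra since $H$ is finitely generated as a group. Define $G := \mathrm{EL}_n(R)$ for some $n \geq 3$. By the Ershov--Jaikin-Zapirain theorem, $G$ has Kazhdan's property $(\mathrm{T})$; the Steinberg commutator identities $[e_{ij}(a), e_{jk}(b)] = e_{ik}(ab)$ show that $G$ is finitely generated as a group (with generators parameterized by a finite algebra-generating set of $R$).

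To verify residual finiteness of $G$, I would exploit residual finiteness of $H$: for each nonzero $r \in R$ there is a finite index normal subgroup $N \triangleleft H$ separating the (finite) support of $r$, yielding a homomorphism $R \twoheadrightarrow \mathbb{F}_p[H/N]$ onto a finite algebra under which $r$ has nonzero image. The induced maps $G = \mathrm{EL}_n(R) \to \mathrm{EL}_n(\mathbb{F}_p[H/N])$ into finite groups jointly separate the points of $G$, so $G$ embeds into a product of finite groups. For the embedding $H \hookrightarrow G$, I would use $\phi\colon h \mapsto \mathrm{diag}(h, h^{-1}, 1, \ldots, 1)$, an injective group homomorphism landing in $\mathrm{EL}_n(R)$ via the standard Weyl-element factorization $\mathrm{diag}(u, u^{-1}, 1, \ldots, 1) = w_{12}(u) w_{12}(1)^{-1}$, valid for any unit $u \in R^\times$.

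The main obstacle is invoking the deep Ershov--Jaikin-Zapirain theorem. A subsidiary point is the reduction to finitely generated $H$, which is unavoidable since countable groups with property $(\mathrm{T})$ are automatically finitely generated; but this is handled by standard embedding machinery. All remaining steps are relatively routine verifications.
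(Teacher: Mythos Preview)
Your overall strategy matches the paper's: reduce to a finitely generated residually finite group (Wilson), form $G=\mathrm{E}(n,\mathbb{F}_p[\tilde H])$, invoke Ershov--Jaikin-Zapirain for property~$(\mathrm{T})$, and deduce residual finiteness from the finite quotients $\mathbb{F}_p[\tilde H/N]$. The gap is in the embedding $H\hookrightarrow G$. Your map
\[
\phi(h)=\mathrm{diag}(h,h^{-1},1,\dots,1)
\]
is \emph{not} a group homomorphism once $H$ is nonabelian: $\phi(g)\phi(h)=\mathrm{diag}(gh,\,g^{-1}h^{-1},1,\dots,1)$, whereas $\phi(gh)=\mathrm{diag}(gh,\,h^{-1}g^{-1},1,\dots,1)$, and these disagree unless $g$ and $h$ commute. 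The Weyl-element factorization $w_{12}(u)w_{12}(1)^{-1}$ does place each individual $\mathrm{diag}(u,u^{-1})$ inside $\mathrm{E}(2,R)$, but it does not make the assignment $u\mapsto\mathrm{diag}(u,u^{-1})$ multiplicative over a noncommutative $R^{\times}$.

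The paper handles exactly this obstruction with an extra preparatory step (Lemmata~\ref{lemma=Order2} and \ref{lemma=RFOrder2}): it first embeds $H$ into a residually finite group that is generated by a finite set $\Xi$ of order-$2$ elements. For $\omega\in\Xi$ one has $\omega=\omega^{-1}$, so $D(\omega,\omega^{-1})=D(\omega,\omega)$; now the map $\lambda\mapsto D(\lambda,\lambda)$ \emph{is} a genuine homomorphism from $\langle\Xi\rangle$ into $\mathrm{E}(2,R)$, and its image lies in the elementary group because each generator $D(\omega,\omega)$ does. Without some device of this sort your construction does not actually produce a subgroup of $G$ isomorphic to $H$.
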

As we saw above, this proposition may be seen as a step towards Theorem~\ref{theorem=Main}; recall Remark~\ref{remark=Malcev}. In particular, by \cite{OsajdaRF}, it follows that there exists a residually finite group with property $(\mathrm{T})$ that is \textit{non-exact}.

Our construction of $G$ as in Proposition~\ref{proposition=RF(T)} employs \textit{elementary groups} over a \textit{non-commutative} ring; compare with the result of \cite{guentnertesserayu} mentioned in the introduction. Let $R$ be a unital ring and $n\in \mathbb{N}_{\geq 2}$. Then the \textit{elementary group} $\mathrm{E}(n,R)$ of degree $n$ over $R$ is defined as the subgroup of $\mathrm{GL}(n,R)$ generated by elementary matrices $e_{i,j}^r$, $i\ne j\in [n]$, $r\in R$. Here for $k,l\in [n]$, $(e_{i,j}^r)_{k,l}$ equals $1$ if $k=l$, $r$ if $(k,l)=(i,j)$, and $0$ otherwise. A key to the proof of Proposition~\ref{proposition=RF(T)} is the following:

\begin{lemma}\label{lemma=Order2}
Let $R$ be a unital ring. Let $\Omega\ne \emptyset$ be a subset of the multiplicative group $R^{\times}$. Assume that for every $\omega \in \Omega$, $\omega^2=1$ holds true. Then for every $n\in \mathbb{N}_{\geq 2}$, the elementary group $\mathrm{E}(2,R)$ contains an isomorphic copy of $\langle \Omega\rangle (\leqslant R^{\times})$.
\end{lemma}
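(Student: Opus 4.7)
The plan is to embed $\langle \Omega \rangle$ into $\mathrm{E}(2,R)$ as scalar matrices via $r\mapsto r\cdot I_{2}$. The key input is the classical ``Weyl element'' construction inside the elementary group, together with the observation that the hypothesis $\omega^{2}=1$ turns what is usually a diagonal entry $\omega^{-1}$ into $\omega$ itself, so the would-be diagonal matrix collapses to a scalar.

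Concretely, for each $a\in R$ I would define
\[
w(a):=e_{1,2}^{a}\cdot e_{2,1}^{-a}\cdot e_{1,2}^{a}\;\in\;\mathrm{E}(2,R),
\]
and check by direct $2\times 2$ matrix multiplication that whenever $a^{2}=1$, one has $w(a)=\begin{pmatrix}0 & a\\ -a & 0\end{pmatrix}$. Applied to $a=1$ this gives $w(1)\in \mathrm{E}(2,R)$ with inverse $w(1)^{-1}=\begin{pmatrix}0 & -1\\ 1 & 0\end{pmatrix}$. For each $\omega\in\Omega$ I then set $h(\omega):=w(\omega)\,w(1)^{-1}\in\mathrm{E}(2,R)$; expanding the product of these two anti-diagonal matrices yields $h(\omega)=\omega\cdot I_{2}$.

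Finally I would consider the scalar map $\Phi\colon R^{\times}\to\mathrm{GL}(2,R)$, $r\mapsto r\cdot I_{2}$, which is an injective group homomorphism (valid even when $R$ is non-commutative, since $(rI_{2})(sI_{2})=(rs)I_{2}$). Because every $\omega\in\Omega$ satisfies $\omega^{-1}=\omega$, every element of $\langle\Omega\rangle$ is a finite product of elements of $\Omega$ with no inverses needed, so $\Phi$ sends it to the corresponding product of matrices $h(\omega)\in\mathrm{E}(2,R)$; hence $\Phi(\langle\Omega\rangle)\subseteq\mathrm{E}(2,R)$, and the restriction of $\Phi$ to $\langle\Omega\rangle$ is the desired embedding. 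The ``for every $n\geq 2$'' phrasing is then automatic from the block embedding $\mathrm{E}(2,R)\hookrightarrow\mathrm{E}(n,R)$. There is no real obstacle beyond two matrix calculations; the one conceptual point worth noting is why $\omega^{2}=1$ is essential: without it, the Weyl trick produces $\mathrm{diag}(\omega,\omega^{-1})$ instead of $\omega\cdot I_{2}$, and the map $\omega\mapsto\mathrm{diag}(\omega,\omega^{-1})$ fails to be a group homomorphism when $R$ is non-commutative.
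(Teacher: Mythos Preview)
Your proposal is correct and is essentially the same argument as the paper's. The paper writes the identity
\[
D(r,r^{-1})=e_{1,2}^{r}\,e_{2,1}^{-(r^{-1})}\,e_{1,2}^{r}\cdot e_{1,2}^{-1}\,e_{2,1}^{1}\,e_{1,2}^{-1},
\]
which, once you recognise the right-hand side as $W(r)\,W(1)^{-1}$ with $W(r)=\begin{pmatrix}0&r\\-r^{-1}&0\end{pmatrix}$, is exactly your $w(\omega)\,w(1)^{-1}$ specialised to $r=\omega$ with $\omega^{2}=1$; the paper then uses the same scalar embedding $\lambda\mapsto D(\lambda,\lambda)=\lambda I_{2}$.
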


\begin{proof}
Since $\mathrm{E}(n,R)\hookrightarrow \mathrm{E}(n+1,R)$ for every $n$ and $R$, we prove the assertion only for $n=2$. For $r_1,r_2\in R^{\times}$, we write the element $\left(
\begin{array}{cc}
r_1 & 0\\
0 & r_2
\end{array}\right) (\in \mathrm{GL}(2,R))$ as $D(r_1,r_2)$. For $r\in R^{\times}$, we have that
\[
D(r_,r^{-1})=e_{1,2}^r e_{2,1}^{-(r^{-1})}e_{1,2}^r e_{1,2}^{-1}e_{2,1}^{1}e_{1,2}^{-1};
\]
hence it belongs to $\mathrm{E}(2,R)$. Let $D_{\Omega}=\{D(\omega,\omega^{-1}):\omega \in \Omega\}$ and $Z$ be the subgroup of $\mathrm{E}(2,R)$ generated by $D_{\Omega}$. Since every $\omega\in \Omega$ satisfies that  $\omega=\omega^{-1}$, the map $\langle \Omega\rangle \ni \lambda \mapsto D(\lambda,\lambda)\in Z $ gives a group isomorphism.
\end{proof}

We also recall the following lemma from  \cite[Lemma~5.1 and Remark~5.2]{MimuraRF}.

\begin{lemma}[\cite{MimuraRF}]\label{lemma=RFOrder2}
Let $k\in \mathbb{N}_{\geq 1}$. Let $H$ be a residually finite group that is $k$-generated. Then, there exists a residually finite group with generating set $\Xi$  such that it contains an isomorphic copy of $H$, $\# \Xi=2k$, and that every element in $\Xi$ is of order $2$.
\end{lemma}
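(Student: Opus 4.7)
The plan is to realize the desired overgroup of $H$ directly inside a profinite group, exploiting the residual-finiteness data of $H$ together with the classical fact that every permutation in a finite symmetric group is a product of two involutions. The point is that this splits each generator of $H$ into two involutions simultaneously, giving precisely $\#\Xi = 2k$.

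First, fix a chain $(N_j)_{j\in\mathbb{N}}$ of finite-index normal subgroups of $H$ with $\bigcap_j N_j = \{e_H\}$, yielding an injection $H \hookrightarrow \prod_j H/N_j$. Composing with the Cayley embedding $H/N_j \hookrightarrow S_{n_j}$ (with $n_j := [H:N_j]$) produces an injection $\iota\colon H \hookrightarrow P := \prod_j S_{n_j}$. Second, invoke the classical theorem that every element of a finite symmetric group factors as a product of two involutions (equivalent to the fact that every permutation is conjugate to its inverse; verified cycle by cycle): for each generator $s_i$ of $H$ ($i \in [k]$) and each $j$, write the coordinate $\iota(s_i)_j \in S_{n_j}$ as $u_i^{(j)} v_i^{(j)}$ with $(u_i^{(j)})^2 = (v_i^{(j)})^2 = e$. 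Third, assemble componentwise to obtain $u_i := (u_i^{(j)})_j$ and $v_i := (v_i^{(j)})_j$ in $P$; these are involutions satisfying $u_i v_i = \iota(s_i)$.

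Now set $\Xi := \{u_1, v_1, \ldots, u_k, v_k\} \subset P$, so that $\#\Xi = 2k$ and every element of $\Xi$ has order $2$. The subgroup $G := \langle \Xi \rangle \leq P$ contains each $\iota(s_i) = u_i v_i$, hence contains the isomorphic copy $\iota(H)$ of $H$. Residual finiteness of $G$ is automatic, since $P$ is profinite and residual finiteness descends to subgroups.

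The only non-routine ingredient is the classical symmetric-group fact that every permutation is a product of two involutions; everything else is a routine application of the Cayley embedding and the componentwise construction in the direct product. I expect no serious obstacle, although one small point worth noting is that the choices of $u_i^{(j)}, v_i^{(j)}$ across different $j$ may be made entirely independently --- a non-issue precisely because $P$ is the full direct product.
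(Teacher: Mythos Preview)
Your argument is correct. The paper does not reproduce a proof here but simply cites \cite[Lemma~5.1 and Remark~5.2]{MimuraRF}; the route you take---embedding $H$ into a product of finite symmetric groups via residual finiteness plus Cayley, then splitting each generator coordinatewise as a product of two involutions---is natural and in the same spirit as the symmetric-group encoding the paper exploits later (Lemma~\ref{lemma=SymmetricGroups}).

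Two cosmetic points worth tightening. First, you assert that each $u_i,v_i$ has order exactly $2$, but in an individual coordinate the factorisation $\iota(s_i)_j=u_i^{(j)}v_i^{(j)}$ may force one factor to be the identity (e.g.\ when $\iota(s_i)_j$ is trivial, or is a single transposition in $S_2$ or $S_3$, where no factorisation into two \emph{genuine} involutions exists). This is harmless: you only need \emph{some} coordinate of $u_i$ and of $v_i$ to be a true involution, which you can arrange at a single index, or by padding each $S_{n_j}$ with a pair of dummy points. In any case the downstream application (Lemma~\ref{lemma=Order2}) only uses $\omega^2=1$. Second, writing $\Xi$ as a set and claiming $\#\Xi=2k$ tacitly assumes no coincidences among the $u_i,v_i$; either read $\Xi$ as an ordered $2k$-tuple (as the paper does with its markings), or perturb at one coordinate to separate them.
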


The following theorem is one of the main theorem of the celebrated paper of  Ershov and Jaikin-Zapirain \cite[Theorem 1.1]{ErshovJaikinZapirain}. See also \cite{MimuraExpository} for an alternative short proof of this qualitative  statement of Theorem~\ref{theorem=ErshovJaikin}.

\begin{theorem}[Ershov and Jaikin-Zapirain, \cite{ErshovJaikinZapirain}]\label{theorem=ErshovJaikin}
Let $R$ be a unital and finitely generated $($associative$)$ ring. Let $n\in \mathbb{N}_{\geq 3}$. Then the elementary group $\mathrm{E}(n,R)$ has property $(\mathrm{T})$.
\end{theorem}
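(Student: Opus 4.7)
The plan is to apply a spectral \emph{codistance} criterion for property $(\mathrm{T})$, in the spirit of Dymara--Januszkiewicz and Ershov--Jaikin-Zapirain: if a group $G$ is generated by subgroups $G_1,\ldots,G_k$ such that, in every unitary representation $(\pi,\mathcal{H})$ of $G$ with no nonzero invariant vectors, the codistance between the fixed-point subspaces $\mathcal{H}^{G_i}$ is uniformly bounded by some $p<1$, then $G$ has property $(\mathrm{T})$ with Kazhdan constant depending only on $p$ and $k$. The whole task thus reduces to choosing a good generating family of subgroups of $\mathrm{E}(n,R)$ and bounding their codistance in an arbitrary representation.

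First I would verify that $\mathrm{E}(n,R)$ is finitely generated. Choosing ring generators $r_1,\ldots,r_d$ of the unital ring $R$ (with $r_1=1$, say), the finite set $\{e_{i,j}^{r_k}:i\neq j\in[n],\ 1\leq k\leq d\}$ generates $\mathrm{E}(n,R)$ via the Steinberg commutator relation
\[
[e_{i,j}^{r},e_{j,k}^{s}]=e_{i,k}^{rs}\qquad(i,j,k\text{ pairwise distinct})
\]
together with additivity $e_{i,j}^{r+s}=e_{i,j}^{r}e_{i,j}^{s}$; these relations build up $e_{i,j}^{r}$ for every non-commutative polynomial $r$ in $r_1,\ldots,r_d$, hence for every $r\in R$. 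The existence of triples of pairwise distinct indices requires $n\geq 3$, which is where this hypothesis first enters the argument. As the family of subgroups for the criterion I would use the root subgroups $U_{i,j}:=\{e_{i,j}^{r}:r\in R\}\cong (R,+)$; they are abelian, and the associated spaces of invariant vectors are well defined regardless of $(R,+)$ being finitely generated as a $\mathbb{Z}$-module.

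The main obstacle, and the technical heart of the theorem, is the codistance bound. The qualitative mechanism is transparent from the Steinberg identity: if $v\in\mathcal{H}^{U_{i,j}}\cap\mathcal{H}^{U_{j,k}}$, then for every $r,s\in R$ one has $\pi(e_{i,k}^{rs})v=\pi([e_{i,j}^{r},e_{j,k}^{s}])v=v$, so taking $r=1$ yields $v\in\mathcal{H}^{U_{i,k}}$. The quantitative, $\varepsilon$-version of this implication---if $v$ is $\varepsilon$-close to both $\mathcal{H}^{U_{i,j}}$ and $\mathcal{H}^{U_{j,k}}$, then $v$ is $O(\varepsilon)$-close to $\mathcal{H}^{U_{i,k}}$---is the central estimate of \cite{ErshovJaikinZapirain}, and is where the essential work lies. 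It is effectively a spectral-gap estimate for the Heisenberg-like subgroup generated by the three root subgroups, proved by carefully bounding the angle between the two closed subspaces $\mathcal{H}^{U_{i,j}}$ and $\mathcal{H}^{U_{j,k}}$ inside $\mathcal{H}$. Once this quantitative commutator estimate is in hand, a routine sum over all index triples in $[n]$ converts it into a uniform codistance bound $p<1$ for the family $\{U_{i,j}\}$, and the spectral criterion of the first paragraph then yields property $(\mathrm{T})$. Since this is a quoted result of Ershov and Jaikin-Zapirain, I would expect the author simply to defer to \cite{ErshovJaikinZapirain}, with the streamlined exposition of \cite{MimuraExpository} available for the quantitative step.
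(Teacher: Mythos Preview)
Your expectation at the end is exactly right: the paper does not prove this theorem at all. It is stated as a quoted result of Ershov and Jaikin-Zapirain, with a pointer to \cite{ErshovJaikinZapirain} for the original proof and to \cite{MimuraExpository} for a short alternative argument for the qualitative statement; no proof sketch is given in the paper itself. Your outline of the codistance/root-subgroup approach is a fair summary of the Ershov--Jaikin-Zapirain method, but there is nothing in the paper to compare it against beyond the bare citation.
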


Hereafter, for a ring $A$ and a group $G$ let $A[G]$ denote the group ring of $G$ over $A$.

\begin{proof}[Proof of Proposition~$\ref{proposition=RF(T)}$]
Let $H$ be a countable residually finite group. By Wilson's theorem \cite{Wilson}, $H$ embeds into a residually finite group $\overline{H}$ that is $2$-generated. Apply Lemma~\ref{lemma=RFOrder2}. Then again by applying \cite{Wilson}, we finally obtain a residually finite $2$-generated group $\tilde{H}$ that admits $\Xi\subseteq \tilde{H}$ with $\# \Xi=4$ such that $\langle \Xi\rangle $ contains an isomorphic copy of $\overline{H}$ and every $\xi\in \Xi$ is of order $2$. 

We claim that for every $n\in \mathbb{N}_{\geq 3}$ and for every prime $p$, the group $G=\mathrm{E}(n,\mathbb{F}_{p}[\tilde{H}])$ fulfills all conditions of  Theorem~\ref{theorem=Main}. Indeed, by Lemma~\ref{lemma=Order2} with $\Omega =\Xi$, we see that the $\Lambda$ above contains an isomorphic copy of $\langle \Xi\rangle$. Here we naturally embed $\tilde{H}$ into $\mathbb{F}_p[\tilde{H}]$. Since $\langle \Xi\rangle\simeq \tilde{H}\geqslant H$, $G$ contains an isomorphic copy of $H$. Next, we will show that $G$ is residually finite. Take a chain $(\tilde{N}_m)_m$, as in the introduction, of normal subgroups of the residually finite $\tilde{H}$. For every $m\in\mathbb{N}$, let $\pi_m\colon \tilde{H}\twoheadrightarrow \tilde{H}/\tilde{N}_m$ be the natural projection. Then the map $\mathbb{F}_p[\tilde{H}]\ni g\mapsto \pi_m(g)\in \mathbb{F}_p[\tilde{H}/\tilde{N}_m]$ induces
\[
\rho_m\colon G\twoheadrightarrow \mathrm{E}(n,\mathbb{F}_p[\tilde{H}/\tilde{N}_m]) .
\]
Then $(\mathrm{Ker}(\rho_m))_{m\in\mathbb{N}}$ provides a desired chain of normal subgroups of $G$. 

Finally, by Theorem~\ref{theorem=ErshovJaikin}, $G$ has property $(\mathrm{T})$.
\end{proof}

In the proof above, the field $\mathbb{F}_p$ may be replaced with several other rings; for instance, an arbitrary  unital finite ring, including all finite fields, and the ring $\mathbb{Z}$.

\section{The proof of Theorem~$\ref{theorem=Main}$}\label{section=TheProof}
The proof of Theorem~\ref{theorem=Main} employs convergences in the space of marked groups, specially the LEF property. We briefly recall them; see \cite{MimuraRF} and \cite{MimuraSakoPartI}, and references therein for more details. 

For a fixed $k\in \mathbb{N}_{\geq 1}$, $\mathbf{G}=(G;s_1,\ldots ,s_k)=(G;S)$ is a \textrm{$k$-marked group} if $G$ is a group and $S=(s_1,\ldots ,s_k)$ is an ordered $k$-tuple of generators of $G$ (as a group). Such an $S$ is called a (\textit{$k$-})\textit{marking} of $G$. (We identify two isomorphic $k$-marked groups.) The set of all $k$-marked groups $\mathcal{G}(k)$, the \textit{space of $k$-marked groups}, is naturally equipped with a compact metrizable topology, as we describe below: $(\mathbf{G}_m)_{m\in \mathbb{N}}=((G_m;s_1^{(m)},\ldots ,s_k^{(m)}))_{m}$, a sequence in $\mathcal{G}(k)$ converges to $\mathbf{G}_{\infty}=(G_{\infty};s_1^{(\infty)},\ldots ,s_k^{(\infty)})$ if the following holds true. For every $R\geq 0$, there exists $m_R\in \mathbb{N}$ such that for all $m\in \mathbb{N}_{\geq m_R}$, the map sending $s_j^{(m)}$ to $s_j^{(\infty)}$ is a partial isomorphism from the $B_{\mathbf{G}_m}(e_{G_m},R)$ to $B_{\mathbf{G}_{\infty}}(e_{G_{\infty}},R)$. Here for $m\in \mathbb{N}\cup \{\infty\}$, $B_{\mathbf{G}_m}(e_{G_m},R)$ denotes the ball centered at $e_{G_m}$ of radius $R$ with respect to the word length on $G_m$ with respect to the marking $(s_1^{(m)},\ldots ,s_k^{(m)})$; for non-empty subsets $A\subseteq \Gamma_1$ and $B\subseteq \Gamma_2$, a map $\phi\colon A\to B$ is a \textit{partial homomorphism} if for every $\gamma,\gamma'\in A$ \textit{with} $\gamma\gamma'\in A$, 
\[
\phi(\gamma\gamma')=\phi(\gamma)\phi(\gamma')
\]
holds true.
A bijective partial homomorphism is called a \textit{partial isomorphism}. The convergence explained above determines the topology on $\mathcal{G}(k)$, which is called the \textit{Cayley topology} in some literature. We write this convergence as $\mathbf{G}_m\stackrel{\mathrm{Cay}}{\longrightarrow} \mathbf{G}_{\infty}$.

\begin{definition}\label{definition=LEF}
\begin{enumerate}[$(1)$]
\item A finitely generated group $\Gamma$ is said to be \textit{LEF} (\textit{Locally Embeddable into Finite groups}) if for some (equivalently, every) marking $S_{\infty}$ of $\Gamma$, there exists a sequence $(\mathbf{G}_{m})_{m\in \mathbb{N}}=((G_m;S_m))_m$ such that it consists of finite marked groups (that means, $G_m$ is finite for every $m\in \mathbb{N}$) and that  $\mathbf{G}_{m}\stackrel{\mathrm{Cay}}{\longrightarrow}(\Gamma;S_{\infty})$. 
\item A sequence $(\mathbf{G}_m)_m$ with the two properties as in $(1)$ is called a \textit{LEF approximation} of $(\Gamma;S_{\infty})$.
\end{enumerate}
\end{definition}
The LEF property is strictly weaker than residual finiteness; see Remark~\ref{remark=LEF}. More precisely, for a LEF approximation, $(G_m;s_1^{(m)},\ldots ,s_k^{(m)})$ need not be a \textit{marked group quotient} of $(\Gamma,s_1^{(\infty)},\ldots ,s_k^{(\infty)})$, where being a marked group quotient means that the map $s_j^{(\infty)}\mapsto s_j^{(m)}$ for each $j\in [k]$ extends to a group homomorphism. Nevertheless, via \textit{diagonal products}, we can construct a \textit{residually finite} group out of a LEF approximation $(\mathbf{G}_m)_m$ of $(\Gamma;S_{\infty})$, as we will see below.

\begin{definition}\label{definition=DiagonalProduct}
Let $(\mathbf{G}_m)_{m\in \mathbb{N}}=((G_m;s_1^{(m)},\ldots ,s_k^{(m)}))_m$ be a LEF approximation of $(\Gamma;s_1^{(\infty)},\ldots ,s_k^{(\infty)})$. We define the \textit{diagonal product} $\Delta_{m\in \mathbb{N}}(\mathbf{G}_m)\in \mathcal{G}(k)$ as follows: Let $K=\prod_{m\in \mathbb{N}}G_m$ and for $j\in [k]$, let $s_j=(s_j^{(0)},s_j^{(1)},\ldots ,s_j^{(m)},\ldots  ) \in K$. Define 
\[
\Delta_{m\in \mathbb{N}}(\mathbf{G}_m)=(\Lambda;s_1,\ldots ,s_k),\quad \textrm{where}\quad L=\langle s_1,\ldots ,s_k\rangle\quad (\leqslant K).
\]
\end{definition}
Then there exists a short exact sequence
\[
1\quad \longrightarrow \quad N=\Lambda\cap \bigoplus_{m\in \mathbb{N}}G_m \quad \longrightarrow \quad  \Lambda \quad \longrightarrow \quad \Gamma \quad \longrightarrow \quad 1,\tag{$\ast$}
\]
where $\Lambda\twoheadrightarrow \Gamma$ above is extended by the map $s_j\mapsto s_j^{(\infty)}$ for each $j\in [k]$. Hence, $N$ above is locally finite (that means, for every non-empty finite subset, the subgroup generated by it is finite); in particular, $N$ is amenable. The LEF property of $\Gamma$ is upgraded to residual finiteness of $\Lambda$ by this procedure. 

In Definition~\ref{definition=DiagonalProduct}, $\Lambda$ is not necessarily dense in $K$. However, by the Goursat lemma, the following holds true; see \cite[Lemmata~4.6 and 4.7]{MimuraRF} for instance.

\begin{lemma}\label{lemma=Goursat}
We stick to the setting as in Definition~$\ref{definition=DiagonalProduct}$. Assume that $\mathrm{no}$ finite simple $($non-trivial$)$ group appears as a group quotient of $G_m$ for  more than one $m\in \mathbb{N}$. Then, the underlying group $\Lambda$ of $\Delta_{m\in \mathbb{N}}((G_m;S_m))$ is dense in $K=\prod_{m\in \mathbb{N}}G_m$. Here we equip $K$ with the product topology and regard it as a compact group.
\end{lemma}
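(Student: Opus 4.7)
The plan is to establish density of $\Lambda$ in $K$ by verifying the equivalent statement that, for every finite subset $F \subset \mathbb{N}$, the projection $\pi_F(\Lambda)$ onto $\prod_{m \in F} G_m$ is the whole finite product. Since each $G_m$ is finite and hence discrete, cylinder sets of the form $\pi_F^{-1}(\{x\})$ form a basis of the product topology on $K$, so surjectivity at every finite level is equivalent to density.

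I would proceed by induction on $|F|$. The base case $|F| = 1$ is immediate, since $\pi_{\{m\}}(\Lambda)$ contains $s_1^{(m)}, \ldots, s_k^{(m)}$, which generate $G_m$ by hypothesis. For the inductive step, fix $F$ with $|F| \geq 2$ and $m_0 \in F$; write $A = \prod_{m \in F \setminus \{m_0\}} G_m$ and $B = G_{m_0}$. By the inductive hypothesis together with the base case, $L := \pi_F(\Lambda) \leqslant A \times B$ projects surjectively onto each factor. The Goursat lemma then describes $L$ via an isomorphism $A/A_0 \stackrel{\sim}{\to} B/B_0$ for some $A_0 \trianglelefteq A$, $B_0 \trianglelefteq B$, with $L = A \times B$ exactly when this common quotient is trivial.

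Assume for contradiction that $A_0 \neq A$. Since $A/A_0$ is a non-trivial finite group, passing to a maximal proper normal subgroup and pulling back yields a common finite simple non-trivial quotient $Q$ of $A$ and of $B = G_{m_0}$. For the surjection $\phi \colon A \twoheadrightarrow Q$, each image $\phi(G_m)$ is a normal subgroup of $Q$ (as $G_m$ is normal in $A$), hence $\{e\}$ or $Q$ by simplicity; since the direct factors $G_m$ generate $A$ and $\phi$ is surjective, some $m' \in F \setminus \{m_0\}$ must satisfy $\phi(G_{m'}) = Q$. This exhibits $Q$ as a finite simple non-trivial quotient of both $G_{m'}$ and $G_{m_0}$ for distinct indices $m' \neq m_0$, contradicting the hypothesis. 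The only genuine obstacle in the argument is this last simple-quotient-of-a-product observation; the remainder is a bookkeeping induction, and the non-repetition hypothesis is invoked exactly at this one step.
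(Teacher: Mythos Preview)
Your argument is correct and is exactly the standard Goursat-lemma reduction that the paper has in mind: the paper does not spell out a proof here but simply cites \cite[Lemmata~4.6 and 4.7]{MimuraRF}, where the same induction-on-finite-factors plus simple-quotient argument is carried out. Your write-up matches that approach, including the key step of pushing a non-trivial common quotient down to a simple one and then locating a single factor $G_{m'}$ that surjects onto it.
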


Before starting the proof of Theorem~\ref{theorem=Main}, we sketch the rough idea. A naive idea which is derived from Proposition~\ref{proposition=RF(T)} is to focus on the sequence of finite groups 
\[
(G_m)_m=(\mathrm{E}(n,\mathbb{F}_p[\tilde{H}/\tilde{N}_m]))_m
\]
as in the proof of Proposition~\ref{proposition=RF(T)}. Since $\tilde{H}$ is $2$-generated, $\mathrm{E}(n,\mathbb{F}_p[\tilde{H}])$ has a $6$-marking $S$; compare with Remark~\ref{remark=Generators}. Hence $(G_m)_{m}$ admits a system of $4$-markings $(S_m)_m=(\rho_m(S))_m$, with respect to which the underlying group of the diagonal product $\Delta_{m\in \mathbb{N}}((G_m;S_m))$ has property $(\mathrm{T})$. Then it might end the proof if we succeed in finding another system of $k'$-markings $(S'_m)_m$ (for some fixed $k'\in \mathbb{N}_{\geq 1}$), with respect to which $((G_m;S'_m))_m$ converge to an amenable marked group. 

However, here we encounter the following two difficulties: one, which is the major part, is that the group ring $\mathbb{F}_p[\tilde{H}/\tilde{N}_m]$ is \textit{far from} simple. It possesses \textit{all} pieces of information of the representations of the finite group $\tilde{H}/\tilde{N}_m$ over $\mathbb{F}_p$; it is unclear how to take a system of markings of $(G_m)_m$ \textit{with a fixed cardinality} so as to have a convergent sequence to an amenable marked group. The other difficulty is that if $p\mid\# (\tilde{H}/\tilde{N}_m)$, then it may cause certain problems coming from modular representation theory. For instance, in that case, $\mathbb{F}_p[\tilde{H}/\tilde{N}_m]$ may \textit{not} be semi-simple; moreover, it may cause some problem when we try to apply Lemma~\ref{lemma=Goursat}. For a fixed $p$, we can always arrange $(\tilde{H}/\tilde{N}_m)_m$ such that it meets the condition $p\mid\# (\tilde{H}/\tilde{N}_m)$ for all $m\in \mathbb{N}$; for instance, replace it with $((\tilde{H}/\tilde{N}_m)\times (\mathbb{Z}/p\mathbb{Z}))_m$. On the other hand, it is \textit{no way} possible, for a general sequence $(\tilde{H}/\tilde{N}_m)_m$, to modify $(\tilde{H}/\tilde{N}_m)_m$ such that for every $m\in \mathbb{N}$, $p\nmid\# (\tilde{H}/\tilde{N}_m)$ holds.

Our idea to deal with the first (and major) difficulty is to \textit{encode the convergence} $\tilde{H}/\tilde{N}_m \stackrel{\mathrm{Cay}}{\longrightarrow}  \tilde{H}$ (with respect to the markings given  by $\pi_m\colon \tilde{H}\twoheadrightarrow \tilde{H}/\tilde{N}_m$) \textit{into symmetric groups}, as follows; it was observed in \cite[Lemma~4.9]{MimuraRF}. Here for a non-empty set $B$, the (full) \textit{symmetric group} $\mathrm{Sym}(B)$ is defined as the group of all bijections on $B$; we denote by $\mathrm{Sym}_{<\aleph_0}(B)$ the group of all bijections on $B$ that fix all but finitely many elements in $B$.

\begin{lemma}[Encoding into symmetric groups: \cite{MimuraRF}]\label{lemma=SymmetricGroups}
Let $((G_m;s_1^{(m)},\ldots ,s_k^{(m)}))_{m\in \mathbb{N}}$ be a LEF approximation of $(\Gamma;s_1^{(\infty)},\ldots ,s_k^{(\infty)})$ in $\mathcal{G}(k)$. Assume that $\Gamma$ is an infinite group and for all $j\in [k]$ and all $m\in \mathbb{N}$, $s_j^{(m)}\ne e_{G_m}$. Then, in $\mathcal{G}(2k)$, we have 
\begin{align*}
&(\mathrm{Sym}(G_m);\chi_{s_1^{(m)}},\ldots ,\chi_{s_k^{(m)}},\theta_{s_1^{(m)}},\dots ,\theta_{s_k^{(m)}})\quad \\
\stackrel{\mathrm{Cay}}{\longrightarrow} \quad &(\mathrm{Sym}_{<\aleph_0}(\Gamma)\rtimes \Gamma;\chi_{s_1^{(\infty)}},\ldots ,\chi_{s_k^{(\infty)}},\theta_{s_1^{(\infty)}},\dots ,\theta_{s_k^{(\infty)}}).
\end{align*}
Here the group $\mathrm{Sym}_{<\aleph_0}(\Gamma)\rtimes \Gamma$ is constructed by the action $\mathrm{Sym}_{<\aleph_0}(\Gamma)\curvearrowleft \Gamma$ induced by right multiplication; for a group $G$ and for $g\in G\setminus \{g\}$, we define $\chi_g\in \mathrm{Sym}_{<\aleph_0}(G)$ and $\theta_g\in \mathrm{Sym}(G)$, respectively, by 
\begin{align*}
\chi_g&=\textrm{$($the transposition of $e_G$ and $g$$)$},\\
\theta_g&=\textrm{$($the permutation induced by the right multiplication of $g$$)$}.
\end{align*}
\end{lemma}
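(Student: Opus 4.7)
The plan is to verify Cayley convergence directly from the definition. Fix $R \geq 0$. By the standard reformulation of Cayley convergence (two words of length $\leq R$ represent the same element in the source iff they do in the target, equivalently every word of length $\leq 2R$ is trivial in the source iff in the target), it suffices to show that for all sufficiently large $m$ and every word $u$ of length $L\leq 2R$ in the $2k$-letter alphabet $\{\chi_{s_j^{(m)}}^{\pm 1}, \theta_{s_j^{(m)}}^{\pm 1}\}_{j\in[k]}$, $u$ is trivial in $\mathrm{Sym}(G_m)$ if and only if the corresponding word is trivial in $\mathrm{Sym}_{<\aleph_0}(\Gamma)\rtimes \Gamma$.

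To each such word $u$, associate its \emph{$\theta$-projection} $g(u)$, the element of $G_m$ (respectively $\Gamma$) obtained by multiplying the $\theta$-letters of $u$ in order while ignoring the $\chi$-letters; plainly $g(u)$ has word length at most $L$. The key structural observation is that the ``non-translation part'' $\sigma(u):=u\cdot\theta_{g(u)}^{-1}$ is supported inside $B(e,2L+1)$. Indeed, tracing the action of $u$ on a point $y$ letter by letter (reading $u$ from right to left), each generator shifts the current position by at most $1$ in the word metric; hence whenever $d(y,e)\geq L+2$ the entire partial orbit stays at distance $\geq 2$ from $e$, and every $\chi$-letter, being a transposition of $e$ with an element of $B(e,1)$, acts as the identity on the partial orbit. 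This forces $u(y)=y\cdot g(u)$, equivalently $\sigma(u)$ fixes every $x\notin B(e,L+1)\cdot g(u)\subseteq B(e,2L+1)$. This decomposition is valid verbatim in both the finite and the limiting settings.

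Because $\Gamma$ is infinite, the hypothesis $\mathbf{G}_m\stackrel{\mathrm{Cay}}{\longrightarrow}\mathbf{G}_\infty$ applied at radius $2L+2$ forces $|G_m|>|B_\Gamma(e,2L+1)|$ for $m\gg 0$, so $G_m\setminus B(e,2L+1)\neq\emptyset$. Combined with the support bound on $\sigma(u)$, this implies that $u=e$ in $\mathrm{Sym}(G_m)$ is equivalent to the conjunction of (i) $g(u)=e_{G_m}$, and (ii) $u$ acts as right multiplication by $g(u)$ on $B(e,L+1)$; the same equivalence holds in the limit. Both conditions only involve products and equalities among elements within $B(e,3L+1)$, all of which are preserved by the LEF partial isomorphism on $B_{\mathbf{G}_m}(e,6R+1)\to B_{\mathbf{G}_\infty}(e,6R+1)$ supplied by the hypothesis. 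Taking $m_R$ large enough to enforce this partial isomorphism together with the ``room outside'' inequality yields the desired bi-implication, and hence the claimed Cayley convergence.

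The main obstacle I expect is pinning down the support bound on $\sigma(u)$ in a uniform, word-length-controlled way. The finite group $\mathrm{Sym}(G_m)$ carries no intrinsic semidirect-product decomposition mirroring $\mathrm{Sym}_{<\aleph_0}(\Gamma)\rtimes\Gamma$, so rather than invoking such a decomposition abstractly, one must extract $u=\sigma(u)\theta_{g(u)}$ directly from the word via the explicit orbit-tracing sketched above. Once this uniform support bound is established, the rest is a routine transfer of two local conditions through the LEF partial isomorphism at a fixed linear multiple of $R$.
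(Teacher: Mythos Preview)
The paper does not actually supply a proof of this lemma: it is quoted from \cite[Lemma~4.9]{MimuraRF} and used as a black box, so there is no in-paper argument to compare against. Your proposal is therefore an independent proof, and the strategy is sound. The decomposition $u=\sigma(u)\,\theta_{g(u)}$ with the orbit-tracing bound on $\mathrm{supp}(\sigma(u))$ is exactly the right substitute for the missing semidirect-product structure on $\mathrm{Sym}(G_m)$, and once that support bound is in hand the reduction to two local conditions transferable through the LEF partial isomorphism is routine, as you say.

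Two small points worth tightening. First, for the objects in the statement to be marked groups at all, you need the $2k$ elements to generate $\mathrm{Sym}(G_m)$ and $\mathrm{Sym}_{<\aleph_0}(\Gamma)\rtimes\Gamma$; this is standard (a transitive subgroup together with a transposition generates the full finitary symmetric group, and conjugating the $\chi$'s by the $\theta$'s produces all adjacent transpositions in the Cayley graph), but it should be said. Second, your radius bookkeeping is safe rather than sharp: all orbit points and products you need live in $B(e,2L+2)$, so a partial isomorphism on a ball of radius roughly $4R+2$ already suffices; your $6R+1$ is a harmless overestimate. With these cosmetic additions the argument is complete.
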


The following lemma in \cite[Lemma~4.1]{MimuraRF} is obvious.

\begin{lemma}\label{lemma=LEFsubgroups}
Let $((\overline{G}_m;\overline{s}_1^{(m)},\ldots ,\overline{s}_{\ell}^{(m)}))_{m\in \mathbb{N}}$ be a convergent sequence to $(\overline{G}_{\infty};\overline{s}_1^{(\infty)},\ldots ,\overline{s}_{\ell}^{(\infty)})$ in the Cayley topology. Let $k\in \mathbb{N}_{\geq 1}$ and let $\omega_1,\ldots, \omega_k$ be words in $F_{\ell}$. For every $m\in \mathbb{N}\cup\{\infty\}$, let $s_j^{(m)}=\omega_j(\overline{s}_1^{(m)},\ldots ,\overline{s}_{\ell}^{(m)})$ for every $j\in [k]$. Let $G_m$ be the subgroup of $\overline{G}_m$ generated by $s_j^{(m)}$, $j\in [k]$. Then $((G_m;s_1^{(m)},\ldots ,s_k^{(m)}))_{m\in \mathbb{N}}$ converges to $(G_{\infty};s_1^{(\infty)},\ldots ,s_k^{(\infty)})$ in the Cayley topology.

In particular, if we a priori know that $((G_m;s_1^{(m)},\ldots ,s_k^{(m)}))_{m\in \mathbb{N}}$ above converges to $(G;s_1,\ldots ,s_k)$ in the Cayley topology, then $\overline{G}_{\infty}$ contains an isomorphic copy of $G$.
\end{lemma}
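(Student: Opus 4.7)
The plan is to reduce Cayley convergence of the derived sequence $((G_m; s_1^{(m)}, \ldots, s_k^{(m)}))_m$ to the given convergence of the ambient sequence $((\overline{G}_m; \overline{s}_1^{(m)}, \ldots, \overline{s}_\ell^{(m)}))_m$ by a word-length comparison. Writing $L := \max_{j\in[k]} |\omega_j|$, the key observation is that any word of length $\leq R$ in the letters $s_j^{(m)}$ becomes, after substituting $s_j^{(m)} = \omega_j(\overline{s}^{(m)})$, a word of length $\leq RL$ in the letters $\overline{s}_i^{(m)}$; hence, viewing $G_m$ as a subgroup of $\overline{G}_m$, we have $B_{G_m}(e_{G_m}, R) \subseteq B_{\overline{G}_m}(e_{\overline{G}_m}, RL)$, and likewise for $m = \infty$.

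Concretely, I would fix $R \geq 0$ and invoke the hypothesised convergence at scale $RL$ to obtain, for all sufficiently large $m$, a partial isomorphism
\[
\phi_m \colon B_{\overline{G}_m}(e_{\overline{G}_m}, RL) \longrightarrow B_{\overline{G}_\infty}(e_{\overline{G}_\infty}, RL)
\]
sending $\overline{s}_i^{(m)} \mapsto \overline{s}_i^{(\infty)}$ for each $i \in [\ell]$. A short induction on word length (valid because every prefix of a word of length $\leq RL$ remains inside the ball, so the partial-homomorphism property applies at each step) shows that $\phi_m(w(\overline{s}^{(m)})) = w(\overline{s}^{(\infty)})$ for every word $w$ of length $\leq RL$. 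Specialising $w$ to $\omega_j$ yields $\phi_m(s_j^{(m)}) = s_j^{(\infty)}$, and more generally that $\phi_m$ carries $B_{G_m}(e, R)$ into $B_{G_\infty}(e, R)$. Applying the same argument to $\phi_m^{-1}$ gives the reverse inclusion, so the restriction of $\phi_m$ is a bijection between the two balls of radius $R$ in $G_m$ and $G_\infty$, hence a partial isomorphism of the desired form. This proves the main assertion.

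For the final ``in particular'' claim, I would appeal to the fact that the Cayley topology on $\mathcal{G}(k)$ is Hausdorff (indeed metrizable), so convergent sequences have unique limits. If $((G_m; s^{(m)}))_m$ also converges to some $(G; s_1, \ldots, s_k)$, then combined with the first part this forces $(G; s_1, \ldots, s_k) = (G_\infty; s_1^{(\infty)}, \ldots, s_k^{(\infty)})$ in $\mathcal{G}(k)$, i.e.\ an isomorphism of marked groups. Since $G_\infty$ is by construction a subgroup of $\overline{G}_\infty$, the latter contains an isomorphic copy of $G$.

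The only real obstacle is the bookkeeping of verifying that $\phi_m$ genuinely restricts to a surjection onto $B_{G_\infty}(e, R)$ rather than merely mapping into $B_{\overline{G}_\infty}(e, RL)$; this is precisely what the inflation factor $L$ and the word-by-word induction are arranged to deliver. Beyond that, the argument is essentially formal, which explains why \cite{MimuraRF} treats it as obvious.
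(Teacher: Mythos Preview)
Your argument is correct and is precisely the natural verification: bound the $(G_m,S_m)$-ball of radius $R$ inside the $(\overline{G}_m,\overline{S}_m)$-ball of radius $RL$ with $L=\max_j|\omega_j|$, then restrict the ambient partial isomorphism and check it respects the new generators word by word. The paper itself offers no proof here---it simply records the lemma as ``obvious'' and cites \cite[Lemma~4.1]{MimuraRF}---so there is nothing to compare against beyond noting that your write-up is exactly the standard unwinding one would expect.
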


Now we are ready to overcome the second difficulty; it is because, thanks to Lemma~\ref{lemma=SymmetricGroups}, we can \textit{switch} from (mysterious) finite groups $(G_m)_m$ to \textit{symmetric groups} $(\mathrm{Sym}(G_m))_{m}$, \textit{for which representation theory is well-developed}. In this paper, we fix a prime $p$ in order to obtain a ``small'' amenable group $\Lambda_1$; compare with Remark~\ref{remark=LFNF-lifts}. 

\begin{proof}[Proof of Theorem~$\ref{theorem=Main}$] 
Fix an odd prime $p$. Given $H$, take $\tilde{H}$ and $(\tilde{N}_m)_{m\in \mathbb{N}}$ as in the proof of Proposition~\ref{proposition=RF(T)}. Then, $(\tilde{N}_m\times \{0\})_m$ forms a chain of a residually finite group $\tilde{H}\times  (\mathbb{Z}/2p\mathbb{Z})$. Write $\tilde{H}\times  (\mathbb{Z}/2p\mathbb{Z})$ and $(\tilde{H}/\tilde{N}_m)\times  (\mathbb{Z}/2p\mathbb{Z})$, respectively, as $L_m=L_{m}(p)$ and $L_{\infty}=L_{\infty}(p)$. Take a $3$-marking $(s_1^{(\infty)},s_2^{(\infty)},s_3^{(\infty)})$ of $L_{\infty}$, where $(s_1^{(\infty)},s_2^{(\infty)})$ corresponds to a $2$-marking of $\tilde{H}$ and $s_3^{(\infty)}$ corresponds to $[1]_{2p}\in \mathbb{Z}/2p\mathbb{Z}$. Then in $\mathcal{G}(3)$,
\[
(L_m;s_1^{(m)},s_2^{(m)},s_3^{(m)})\quad \stackrel{\mathrm{Cay}}{\longrightarrow} \quad (L_{\infty};s_1^{(\infty)},s_2^{(\infty)},s_3^{(\infty)}).
\]
Here $(s_1^{(m)},s_2^{(m)},s_3^{(m)})$ is obtained as the image of $(s_1^{(\infty)},s_2^{(\infty)},s_3^{(\infty)})$ by the quotient map $L_{\infty}\twoheadrightarrow L_m$. We may assume that $\# L_m \geq 5$, $(\# L_m)_m$ is strictly increasing on $m$, and that for every $j\in [3]$ and $m\in \mathbb{N}$, $s_j^{(m)}\ne e_{L_m}$ holds. 

Encode this convergence into symmetric groups by Lemma~\ref{lemma=SymmetricGroups} and obtain
\begin{align*}
&(\mathrm{Sym}(L_m);\chi_{s_1^{(m)}},\chi_{s_2^{(m)}},\chi_{s_3^{(m)}},\theta_{s_1^{(m)}},\theta_{s_2^{(m)}},\theta_{s_3^{(m)}})\\
\quad \stackrel{\mathrm{Cay}}{\longrightarrow} \quad& (\mathrm{Sym}(L_{\infty})\rtimes L_{\infty};\chi_{s_1^{(\infty)}},\chi_{s_2^{(\infty)}},\chi_{s_3^{(\infty)}},\theta_{s_1^{(\infty)}},\theta_{s_2^{(\infty)}},\theta_{s_3^{(\infty)}})
\end{align*}
in $\mathcal{G}(6)$. Note that for every $m\in \mathbb{N}$, $p\mid\# L_m$ and $2\mid\# L_m$. 

Fix $n\in \mathbb{N}_{\geq 3}$. For each $m\in \mathbb{N}$, we take the \textit{heart} of $p$-modular \textit{standard representation}
\[
\overline{\mathrm{std}}_{L_m,p}\colon \mathrm{Sym}(L_m)\to \mathrm{Aut}(U_m),
\]
which is defined as follows. First, let $V_m=V_m(p)$ be the zero-sum subspace of $\mathbb{F}_p^{L_m}$:
\[
V_m=\left\{(v_h)_{h\in L_m}:\ v_h \in \mathbb{F}_p\ \textrm{for every $h\in L_m$},\ \sum_{h\in L_m}v_h=0 \right\}.
\]
We warn that the permutation representation of $\mathrm{Sym}(L_m)$ on $V_m$, which is called the $p$-modular standard representation in some literature, is \textit{not} irreducible: indeed, since $p\mid\# L_m$, the constant vectors lie there. Finally, we define $U_m=U_m(p)$ to be the quotient space modulo the space of constant vectors,
\[
U_m= V_m/\mathbb{F}_p\textbf{1},
\]
on which $\mathrm{Sym}(L_m)$ acts $U_m$ by permutation of the coordinates. We claim that it is \textit{irreducible} and faithful. To show it, for the latter, observe that the alternating group $\mathrm{Alt}(L_m)$ over the set $L_m$ is simple; for the former, note that the set of vectors $\{[\delta_{h_1}-\delta_{h_2}]:h_1\ne h_2 \in L_m\}$ generates $U_m$. Set 
\[
G_m^{U_m}=\mathrm{E}(n,\overline{\mathrm{std}}_{L_m,p}(\mathbb{F}_p[\mathrm{Sym}(L_m)])),
\]
where the representation $(U_m,\overline{\mathrm{std}}_{L_m,p})$ induces 
\[
\overline{\mathrm{std}}_{L_m,p}\colon \mathbb{F}_p[\mathrm{Sym}(L_m)]\to \mathrm{End}(U_m).
\]
The key to the proof is the following: since $\overline{\mathrm{std}}_{L_m,p}$ is irreducible, it holds that
\[
\overline{\mathrm{std}}_{L_m,p}(\mathbb{F}_p[\mathrm{Sym}(L_m)]) \simeq \mathrm{Mat}_{(\# L_m-2)\times (\# L_m-2)}(\mathbb{F}_p).
\]
Indeed, every field is a splitting field for symmetric groups; see \cite[Theorem~2.1.12 and Chapter~6]{BookJamesKerber}. Therefore, we have the isomorphism
\[
G_m^{U_m}\simeq \mathrm{SL}(n(\# L_m-2),\mathbb{F}_p).
\]

To close up the proof, we consider two systems of markings of the sequence $(G_m^{U_m})_{m\in \mathbb{N}}$, as follows:
\begin{itemize}
 \item \underline{Marking corresponding to $\Lambda_1$:}\ For each $m\in \mathbb{N}$, let 
\[
t'_1{}^{(m)}=e_{1,2}^1\quad \textrm{and} \quad t'_2{}^{(m)}=\left(\begin{array}{ccccc}
0 & \cdots & \cdots & 0& -1 \\
1 & 0         &  \          &  \  & 0 \\
0 & 1         & 0         &  \  & \vdots \\
\vdots & \ddots & \ddots & \ddots & \vdots \\
0 & \cdots & 0 & 1 & 0 \\
\end{array}
\right)
\]
in $\mathrm{SL}(n(\# L_m-2),\mathbb{F}_p) (\simeq G_m^{U_m})$. These two form a $2$-marking of $G_m^{U_m}$; see \cite[Remarks~5.5 and 5.10]{MimuraSakoPartI}. Then, it is known that the sequence of $2$-marked groups $(G_m^{U_m};t'_1{}^{(m)},t'_2{}^{(m)})_{m\in \mathbb{N}}$ converges to a marked group with underlying group $\Gamma_1$ being amenable; see the last part of \cite[Remark~5.10]{MimuraSakoPartI} and \cite{LubotzkyWeiss}. 
\item \underline{Marking corresponding to $\Lambda_2$:}\ Observe that for each $m\in \mathbb{N}$, the marked group $(\mathrm{Sym}(L_m);\chi_{s_1^{(m)}},\chi_{s_2^{(m)}},\chi_{s_3^{(m)}},\theta_{s_1^{(m)}},\theta_{s_2^{(m)}},\theta_{s_3^{(m)}})$ is a marked group quotient of $(F;a_1,a_2,a_3,a_4,a_5,a_6)$. Here $F=F(p)$ be the group
\[
F=(\mathbb{Z}/2\mathbb{Z})\ast(\mathbb{Z}/2\mathbb{Z})\ast (\mathbb{Z}/2\mathbb{Z})\ast F_2\ast (\mathbb{Z}/2p\mathbb{Z})
\]
and $a_1\in \mathbb{Z}/2\mathbb{Z}$, $a_2\in \mathbb{Z}/2\mathbb{Z}$, $a_3\in \mathbb{Z}/2\mathbb{Z}$, $a_4\in \mathbb{Z}$, $a_5\in \mathbb{Z}$, and $a_6 \in \mathbb{Z}/2p\mathbb{Z}$, respectively, are cyclic generators. Here $F_2=\mathbb{Z}\ast\mathbb{Z}$ means the free group of rank $2$. The group $\mathrm{E}(n,\mathbb{F}_p[F])$ admits a $9$-marking $T=(t_1,\ldots,t_l)$; see Remark~\ref{remark=Generators} for a concrete construction. Thus, we have a system of $9$-markings of $(G_m^{U_m})_m$ by setting
\[
(t_1^{(m)},\ldots ,t_9^{(m)})=(\psi_{m}(t_1),\ldots,\psi_{m}(t_9)),
\]
where $\psi_m\colon \mathrm{E}(n,\mathbb{F}_p[F])\twoheadrightarrow G_m^{U_m}$ is the group quotient map induced by the natural ring projection $\mathbb{F}_p[F]\twoheadrightarrow \overline{\mathrm{std}}_{L_m,p}(\mathbb{F}_p[\mathrm{Sym}(L_m)])$. 
\end{itemize}

Now set the compact group $K^{(H)}$ as 
\begin{align*}
K^{(H)}=\prod_{m\in \mathbb{N}}G_m^{U_m}\quad \left(\simeq \prod_{m\in \mathbb{N}}\mathrm{SL}(n(\# L_m-2),\mathbb{F}_p)\right).
\end{align*}
Hence $K^{(H)}$ is of the form 
\[
K^{(H)}=\prod_{m\in \mathbb{N}}\mathrm{SL}(nl_m,\mathbb{F}_p),
\]
where we set $(l_m)_{m\in \in \mathbb{N}}$ by for every $m\in \mathbb{N}$, $l_m=\# (L_m(p))-2(=2p\# (\tilde{H}_m/\tilde{N}_m)-2)$. 
We define $\Lambda_1$ and $\Lambda_2$, respectively, by
\begin{align*}
&\Lambda_1=\textrm{$($the underlying group of $\Delta_{m\in \mathbb{N}}((G_m^{U_m};t'_1{}^{(m)},t'_2{}^{(m)}))$$)$},\\
\textrm{and} \quad &\Lambda_2=\textrm{$($the underlying group of $\Delta_{m\in \mathbb{N}}((G_m^{U_m};t_1^{(m)},t_2^{(m)},\ldots ,t_9^{(m)}))$$)$}.
\end{align*}
We then claim that these $K^{(H)}$, $\Lambda_1$ and $\Lambda_2$ fulfill all of the assertions as in Theorem~\ref{theorem=Main}. Indeed, by Lemma~\ref{lemma=Goursat}, $\Lambda_1$ and $\Lambda_2$ are both dense in $K^{(H)}$. Since $\Gamma_1$ above is amenable, it follows from $(\ast)$ that assertion $(i)$ is satisfied; recall that amenability is closed under taking group extensions. Assertion $(ii)$ follows because $\Delta_{m\in \mathbb{N}}((G_m^{U_m};t_1^{(m)},\ldots ,t_9^{(m)}))$ is a marked group quotient of $(\mathrm{E}(n,\mathbb{F}_p[F]);t_1,\ldots ,t_9)$. Here recall Theorem~\ref{theorem=ErshovJaikin}; recall also that property $(\mathrm{T})$ passes to group quotients. To show $(iii)$, let $\Gamma_2$  be the underlying group of the Cayley limit of $((G_m^{U_m};t_1^{(m)},\ldots ,t_9^{(m)}))_m$. First recall that $((L_m;\theta_{s_1^{(m)}},\theta_{s_2^{(m)}},\theta_{s_3^{(m)}}))_m$ converges to $(L_{\infty};\theta_{s_1^{(\infty)}},\theta_{s_2^{(\infty)}},\theta_{s_3^{(\infty)}})$ in $\mathcal{G}(3)$. Observe that the convergence above comes from the chain, which we chose at the beginning of the proof, for the residually finite group $L_{\infty}$. Combine it with Lemma~\ref{lemma=Order2}; recall in addition that $\overline{\mathrm{std}}_{L_m,p}$ is faithful. Then, it may be verified that $\Gamma_2$ admits an isomorphic copy of $\overline{H}$ such that it is  \textit{isomorphically lifted} under the lift $\Lambda_2\twoheadrightarrow \Gamma_2$. Therefore, we conclude that
\[
\Lambda_2\geqslant \overline{H}\quad (\geqslant H),
\]
as desired. For more details of the argument on assertion $(iii)$, consult Lemma~\ref{lemma=LEFsubgroups} and \cite[Subsection~4.4]{MimuraRF}. 
\end{proof}

\section{Proof of Theorem~$\ref{theorem=Common}$}\label{section=Common}

The construction of $K^{(H)}$ in Theorem~\ref{theorem=Main} \textit{does} depend on the choice of $H$; more precisely, it depends on the sequence $(l_m)_m=(2p\# (\tilde{H}_m/\tilde{N}_m)-2)_m$. Hence, if we decompose
\begin{align*}
\mathcal{K}&=K^{(H)}\times \mathcal{K}_0^{(H)}\\
&=\left(\prod_{m\in \mathbb{N}}\mathrm{SL}(nl_m,\mathbb{F}_p)\right)\times \left(\prod_{i\in \mathbb{N}\setminus \{l_m:\,m\in \mathbb{N}\}}\mathrm{SL}(ni,\mathbb{F}_p)\right),
\end{align*}
then what remains is to deal with the latter component $\mathcal{K}_0^{(H)}$.

The key to the proof of Theorem~$\ref{theorem=Common}$ is the following: let $p$ be an odd prime and $n\in \mathbb{N}_{\geq 4}$ be even. Then for \textit{every} $i\in \mathbb{N}_{\geq 1}$, $\mathrm{SL}(ni,\mathbb{F}_p)\simeq \mathrm{E}(n,\mathrm{Mat}_{i\times i}(\mathbb{F}_p))$ is a group quotient of $\mathrm{E}(n,\mathbb{F}_p \langle y,z \rangle)$, where $\mathbb{F}_p \langle y,z \rangle$ denotes the non-commutative polynomial ring over $\mathbb{F}_p$ with indeterminates $y$ and $z$. Indeed, the map that sends $y$ to $e_{1,2}^1\in \mathrm{Mat}_{i\times i}(\mathbb{F}_p)$ and $z$ to the matrix in $\mathrm{Mat}_{i\times i}(\mathbb{F}_p)$ representing a cyclic permutation induces a group quotient map $\mu_i\colon \mathrm{E}(n,\mathbb{F}_p \langle y,z \rangle)\twoheadrightarrow \mathrm{SL}(ni,\mathbb{F}_p)$. In a manner similar to one in Remark~\ref{remark=Generators}, it follows that $\mathrm{E}(n,\mathbb{F}_p \langle y,z \rangle)$ is generated by four elements 
\[
b_1=e_{1,2}^1,\ b_2=e_{1,2}^{y},\ b_3=e_{1,2}^{z}\quad  \textrm{and} \quad 
b_4=\left(\begin{array}{ccccc}
0 & \cdots & \cdots & 0& -1 \\
1 & 0         &  \          &  \  & 0 \\
0 & 1         & 0         &  \  & \vdots \\
\vdots & \ddots & \ddots & \ddots & \vdots \\
0 & \cdots & 0 & 1 & 0 \\
\end{array}
\right)
\]
in $\mathrm{E}(n,\mathbb{F}_p \langle y,z \rangle)$. Let $(\beta_1^{(i)},\beta_2^{(i)},\beta_3^{(i)},\beta_4^{(i)})$ be $(\mu_i(b_1),\mu_i(b_2),\mu_i(b_3),\mu_i(b_4))$. Then, for each $i\in \mathbb{N}$, $(\beta_1^{(i)},\beta_2^{(i)},\beta_3^{(i)},\beta_4^{(i)})$ is a $4$-marking of $\mathrm{SL}(ni,\mathbb{F}_p)$. By Theorem~\ref{theorem=ErshovJaikin}, the underlying group $\Sigma_0$ of $\Delta_{i\in \mathbb{N}}((\mathrm{SL}(ni,\mathbb{F}_p);\beta_1^{(i)},\beta_2^{(i)},\beta_3^{(i)},\beta_4^{(i)}))$ has property $(\mathrm{T})$.  For an odd $n\in \mathbb{N}_{\geq 3}$, we replace $b_4$ above with $b_4'$ as in the proof below.

Ershov and Jaikin-Zapirain \cite{ErshovJaikinZapirain} constructed the first counterexample to Conjecture~\ref{conjecture=LubotzkyWeiss} in the manner above.

\begin{proof}[Proof of Theorem~$\ref{theorem=Common}$]
Fix $p$ and $n$ as in the statement of Theorem~$\ref{theorem=Common}$. For each $i\in \mathbb{N}_{\geq 1}$, let $Q_i(=Q_i(n,p))=\mathrm{SL}(ni,\mathbb{F}_p)$. Observe that for every $p$ and $n$, the sequence $(Q_i)_{i\in \mathbb{N}}$ fulfills the condition of Lemma~\ref{lemma=Goursat}. 

First, we discuss the case where $n$ is even. Note that then for every $i\in \mathbb{N}_{\geq 1}$, $ni$ is even. Let 
\[
\alpha_1{}^{(i)}=e_{1,2}^1\quad \textrm{and} \quad \alpha_2{}^{(i)}=\left(\begin{array}{ccccc}
0 & \cdots & \cdots & 0& -1 \\
1 & 0         &  \          &  \  & 0 \\
0 & 1         & 0         &  \  & \vdots \\
\vdots & \ddots & \ddots & \ddots & \vdots \\
0 & \cdots & 0 & 1 & 0 \\
\end{array}
\right)
\]
in $\mathrm{SL}(ni,\mathbb{F}_p)(=Q_i)$. For the same reasoning as in the proof of Theorem~\ref{theorem=Main}, the Cayley limit of $(Q_i;\alpha_1^{(i)},\alpha_2^{(i)})$ (as $i\to \infty$) is amenable. By taking the underlying group of the diagonal product $\Delta_{i\in \mathbb{N}_{\geq 1}}((Q_i;\alpha_1^{(i)},\alpha_2^{(i)}))$, we obtain $\Sigma_1$. Next, we will construct $\Sigma_2=\Sigma_2^{(H)}$ for a given countable and residually finite $H$. For such $H$, take $(L_m)_{m\in \mathbb{N}}=(L_m(p))_m$ as in the proof of Theorem~\ref{theorem=Main}. Let $(l_m)_{m\in \mathbb{N}}$ be the sequence given by $l_m=\# L_m-2$. Then by the proof of Theorem~\ref{theorem=Main}, for each $m\in \mathbb{N}$, we have the marking
\[
(Q_{l_m};t_1^{(m)},\ldots ,t_9^{(m)})
\]
such that the underlying group $\Lambda_2=\Lambda_2^{(H)}$ of $\Delta_{m\in \mathbb{N}}((Q_{l_m};t_1^{(m)},\ldots ,t_9^{(m)}))$ has property $(\mathrm{T})$ and that $\Lambda_2$ contains an isomorphic copy of $H$. Let
\[
\mathcal{K}_0=\mathcal{K}_0^{(H)}=\prod_{i\in \mathbb{N}\setminus \{l_m:\,m\in \mathbb{N}\}}Q_i.
\]
Now we make use of the $4$-marking $(\beta_1^{(i)},\beta_2^{(i)},\beta_3^{(i)},\beta_4^{(i)})$ of $G_i$, described in the beginning of this section. Then, the underlying group $\Sigma_0^{(H)}$ of 
\[
\Delta_{i\in \mathbb{N}\setminus \{l_m:\,m\in \mathbb{N}\}}((\mathrm{SL}(ni,\mathbb{F}_p);\beta_1^{(i)},\beta_2^{(i)},\beta_3^{(i)},\beta_4^{(i)}))
\]
has property $(\mathrm{T})$ as well.

Finally, for each $i\in \mathbb{N}$, consider the following $13$-marked group:
\begin{align*}
&(Q_i;\tau_1^{(i)},\tau_2^{(i)},\ldots ,\tau_{13}^{(i)})\\
=&
\left\{\begin{array}{l}
(Q_i;t_1^{(m)},t_2^{(m)},t_3^{(m)},t_4^{(m)},t_5^{(m)},t_6^{(m)},t_7^{(m)},t_8^{(m)},t_9^{(m)},e_{Q_i},e_{Q_i},e_{Q_i},e_{Q_i}),\\
(Q_i;e_{Q_i},e_{Q_i},e_{Q_i},e_{Q_i},e_{Q_i},e_{Q_i},e_{Q_i},e_{Q_i},e_{Q_i},\beta_1^{(i)},\beta_2^{(i)},\beta_3^{(i)},\beta_4^{(i)}).
\end{array}
\right.
\end{align*}
Here the first case applies if there exists (unique) $m\in \mathbb{N}$ such that $i=l_m$; otherwise the second case applies. It is straightforward to see that the underlying group $\Sigma_2^{(H)}$ of $\Delta_{i\in \mathbb{N}}((Q_i;\tau_1^{(i)},\ldots ,\tau_{13}^{(i)}))$ is isomorphic to $\Lambda_2^{(H)}\times\Sigma_0^{(H)}$. Hence the group $\Sigma_2^{(H)}$ has property $(\mathrm{T})$ and it contains an isomorphic copy of $H$. It is dense in $\mathcal{K}$ by Lemma~\ref{lemma=Goursat}. This ends the proof for the case where $n$ is even. 

Secondly, we deal with the case where $n$ is odd. In this case, decompose $\mathcal{K}$ as 
\[
\mathcal{K}=\prod_{i\in \mathbb{N}_{\geq 1,\mathrm{even}}}Q_{i}\times \prod_{i\in \mathbb{N}_{\mathrm{odd}}}Q_i,
\]
where we define $\mathbb{N}_{\geq 1,\mathrm{even}}=\mathbb{N}_{\geq 1}\cap 2\mathbb{Z}$ and $\mathbb{N}_{\mathrm{odd}}=\mathbb{N}\cap (2\mathbb{Z}+1)$. For $i\in \mathbb{N}_{\mathrm{odd}}$, let
\[
\alpha_2'{}^{(i)}=\left(\begin{array}{ccccc}
0 & \cdots & \cdots & 0& 1 \\
1 & 0         &  \          &  \  & 0 \\
0 & 1         & 0         &  \  & \vdots \\
\vdots & \ddots & \ddots & \ddots & \vdots \\
0 & \cdots & 0 & 1 & 0 \\
\end{array}
\right)
\]
in $\mathrm{SL}(ni,\mathbb{F}_p)(=Q_i)$. Define a $4$-marked group $(Q_i;\sigma_1^{(i)},\sigma_2^{(i)},\sigma_3^{(i)},\sigma_4^{(i)})$ by
\begin{align*}
(Q_i;\sigma_1^{(i)},\sigma_2^{(i)},\sigma_3^{(i)},\sigma_4^{(i)})
=
\left\{\begin{array}{cl}
(Q_i;\alpha_1^{(i)},\alpha_2^{(i)},e_{Q_i},e_{Q_i}),& \textrm{if\ }i\in \mathbb{N}_{\geq 1,\mathrm{even}},\\
(Q_i;e_{Q_i},e_{Q_i},\alpha_1^{(i)},\alpha_2'{}^{(i)}),& \textrm{if\ }i\in \mathbb{N}_{\mathrm{odd}}.
\end{array}
\right.
\end{align*}
Then the underlying group $\Sigma_1$ of $\Delta_{i\in \mathbb{N}}((Q_i;\sigma_1^{(i)},\sigma_2^{(i)},\sigma_3^{(i)},\sigma_4^{(i)}))$ is amenable and dense in $\mathcal{K}$. To obtain $\Sigma_2^{(H)}$, we may take the same construction as one in the case of an even $n$ with the following modification: replace $b_4$ with 
\[
b_4'=\left(\begin{array}{ccccc}
0 & \cdots & \cdots & 0& 1 \\
1 & 0         &  \          &  \  & 0 \\
0 & 1         & 0         &  \  & \vdots \\
\vdots & \ddots & \ddots & \ddots & \vdots \\
0 & \cdots & 0 & 1 & 0 \\
\end{array}
\right)
\] in $\mathrm{E}(n,\mathbb{F}_p\langle y,z\rangle)$ and let $\beta_4^{(i)}=\mu_i(\beta_4')$, instead of setting $\beta_4^{(i)}=\mu_i(\beta_4)$. 
\end{proof}

\section{Remarks}\label{section=Remarks}
\begin{remark}\label{remark=SQ}
In Proposition~\ref{proposition=RF(T)}, it is rather standard to embed a given $H$ into a group with property $(\mathrm{T})$ in the following way. Take an infinite hyperbolic group $\tilde{C}$ with property $(\mathrm{T})$. Then by SQ-universality of $\tilde{C}$ \cite{Olshanskii}, there exists a  group quotient $C$ of $\tilde{C}$ such that $H$ embeds into $C$. However, even if $\tilde{C}$ is residually finite, the procedure $\tilde{C}\twoheadrightarrow C$ may  spoil the residual finiteness.
\end{remark}

\begin{remark}\label{remark=Osajda}
The original construction of residually finite non-exact groups by Osajda \cite{OsajdaRF} made essential use of the work of Wise \cite{wise} and Agol \cite{agol} on virtually special groups to obtain residual finiteness; these groups are direct limits of certain virtually special groups. All virtually special groups are known to have the \textit{Haagerup property}, which may be seen as a strong negation of property $(\mathrm{T})$ for infinite countable groups. It then follows that the originally constructed non-exact groups above (or their variant in \cite[Remark~3.3]{MimuraRF}) \textit{never} have property $(\mathrm{T})$. 
\end{remark}

\begin{remark}\label{remark=Generators}
Let $p$ be a prime and $F=(\mathbb{Z}/2\mathbb{Z})\ast(\mathbb{Z}/2\mathbb{Z})\ast (\mathbb{Z}/2\mathbb{Z})\ast F_2\ast (\mathbb{Z}/2p\mathbb{Z})
$ as in the proof of Theorem~\ref{theorem=Main}. Let $R=\mathbb{F}_p[F]$: it is isomorphic to the non-commutative ring $\mathbb{F}_p\langle x_1,x_2,x_3,x_4^{\pm },x_5^{\pm},x_6\rangle$ subject to the relation $x_1^2=x_2^2=x_3^2=x_6^{2p}=1$. Let $\Gamma=\mathrm{E}(n,R)$. We provide a concrete $9$-marking of $\Gamma$, which was used in the proof of Theorem~\ref{theorem=Main}.

If $n\in \mathbb{N}_{\geq 3}$ above is odd, then $\Gamma$ is generated by $e_{1,2}^{x}$, $x\in \{x_1,x_2,x_3,x_4,x_4^{-1},x_5,x_5^{-1},x_6\}$, and by the matrix $\beta=\beta_4'$ as in the proof of Theorem~\ref{theorem=Common}. Indeed, for $r\in R$, $\beta e_{1,2}^{r}\beta^{-1}=e_{2,3}^{r}$ holds. Observe the following commutator relation: for $r_1,r_2\in R$,
\[
[e_{i,j}^{r_1},e_{j,k}^{r_2}]=e_{i,k}^{r_1r_2} \quad \textrm{for every }i\ne j\ne k\ne i. \tag{$\#$}
\]
Here our convention of commutators is $[g_1,g_2]=g_1^{-1}g_2^{-1}g_1g_2$. 
It follows from $(\#)$ that $[e_{1,2}^{x_4},e_{2,3}^{(x_4^{-1})}]=e_{1,3}^1$. Then by taking conjugations of $e_{1,3}^1$ by powers of $\beta$ and by  $(\#)$, we may obtain all elements of the form $e_{i,j}^1$, $i\ne j\in [n]$. Again by $(\#)$, we may express every matrix of the form $e_{i,j}^r$, $i\ne j\in [n]$ and $r\in R$, as some product of the nine elements above; note also that $e_{i,j}^{r_1}e_{i,j}^{r_2}=e_{i,j}^{r_1+r_2}$.

For an even number $n\in \mathbb{N}_{\geq 3}$, we may let $\beta=\beta_4$ as in the proof of Theorem~\ref{theorem=Common}, instead of setting $\beta=\beta_4'$, and obtain a $9$-marking of $\Gamma$ as well.
\end{remark}

\begin{remark}\label{remark=LEF}
One of the key to the proof of Theorem~\ref{theorem=Main} is   to extend the framework from residually finite groups to LEF groups. Indeed, we employ Lemma~\ref{lemma=SymmetricGroups} to deal with the two difficulties explained in the former part of Section~\ref{section=TheProof}. Here, even if the original convergence of $(G_m)_m$ to $\Gamma$ (with respect to certain markings) comes from a chain associated with residual finiteness, the convergent sequence $(\mathrm{Sym}(G_m))_m$ to $\mathrm{Sym}_{<\aleph_0}(\Gamma)\rtimes \Gamma$  is \textit{merely} a LEF approximation. More precisely, the group $\mathrm{Sym}_{<\aleph_0}(\Gamma)\rtimes \Gamma$ is \textit{never} residually finite. This is because it contains $\mathrm{Alt}(\Gamma)$, the increasing union of finite alternating subgroups, which is an infinite simple group.
\end{remark}

\begin{remark}\label{remark=LFNF-lifts}
By $(\ast)$ and \cite[Remark~5.10]{MimuraSakoPartI}, $\Lambda_1$ as in the proof of Theorem~\ref{theorem=Main} admits the following short exact sequence:
\[
1\quad \longrightarrow \quad N \quad \longrightarrow \quad  \Lambda_1 \quad \longrightarrow \quad N(\mathbb{Z},\mathbb{F}_p)\rtimes \mathbb{Z} \quad \longrightarrow \quad 1.
\]
Here $N$ above is \textit{locally fully normally finite}, that means, for every non-empty finite subset $A\subseteq N$, the normal closure $\llangle A\rrangle_{\Lambda_1}$ \textit{in $\Lambda_1$} is finite; see also \cite[Definition~3.1 and Remark~5.6]{MimuraRF}. The group $N(\mathbb{Z},\mathbb{F}_p)$ is the increasing union of 
\[
(N(2m+1,\mathbb{F}_p)\simeq)\quad N([-m,m]\cap \mathbb{Z},\mathbb{F}_p)=\langle e_{i,j}^r: i,j\in [-m,m]\cap \mathbb{Z},\ i>j, \ r\in \mathbb{F}_p\rangle
\]
over $m\in \mathbb{N}$; here we regard $\mathbb{Z}$ and $[-m,m]\cap \mathbb{Z}$ as sets of indices, and embed naturally $N(2m+1,\mathbb{F}_p)$ into $N(\mathbb{Z},\mathbb{F}_p)$. To construct $N(\mathbb{Z},\mathbb{F}_p)\rtimes \mathbb{Z}$, we let $\mathbb{Z}$ on the right side acts on $N(\mathbb{Z},\mathbb{F}_p)$ by the translation on the index set $\mathbb{Z}$. Note that each $N(2m+1,\mathbb{F}_p)$ is a $p$-group. In particular, $\Lambda_1$ above admits a surjection onto $\mathbb{Z}$ with locally finite kernel; it has \textit{asymptotic dimension} (see \cite[Chapter~2]{bookNowakYu}) $1$.

In the construction of $\Lambda_1$ as in the proof of Theorem~\ref{theorem=Main}, we may replace $p$ with a sequence $(p_m)_m$ indexed by $m\in \mathbb{N}$ to ensure that $p_m\nmid\#(\tilde{H}/\tilde{N}_m)$. In that case, the $p_m$-modular standard representation of $\mathrm{Sym}(\tilde{H}/\tilde{N}_m)$ is irreducible and we do not need to take a quotient vector space. However, the resulting $\Lambda_1$ in this case is obtained by replacement of $N(\mathbb{Z},\mathbb{F}_p)$ with $N(\mathbb{Z},\mathbb{Z})$; this new $\Lambda_1$ contains an isometric copy of $\mathbb{Z}^l$ for every $l\in \mathbb{N}$. It is a quite ``huge'' group, compared with the original $\Lambda_1$ constructed in the proof of Theorem~\ref{theorem=Main}.

Similarly, if $n$ is even in Theorem~\ref{theorem=Common}, then $\Sigma_1$ as in the proof of Theorem~\ref{theorem=Common} admits the short exact sequence same as one in above in the present remark.
\end{remark}

\begin{remark}\label{remark=Upgrading}
By \cite{MimuraUpgrading}, if we take $n$ from $\mathbb{N}_{\geq 4}$, then the group $\Lambda_2$ as in the proof of Theorem~\ref{theorem=Main}, in fact, enjoys the fixed point property $(\mathrm{F}_{\mathcal{B}_{L_q}})$ with respect to $L_q$-spaces for \textit{all} $q\in (1,\infty)$. See also \cite[Subsection~5.2.2 and Remark~5.7]{Oppenheim} for other fixed point properties for $\Lambda_2$ above when the prime $p$ in the proof of Theorem~\ref{theorem=Main} is sufficiently large. 
\end{remark}

\begin{remark}\label{remark=2-generated}
If we are allowed to ``enlarge'' the amenable group $\Lambda_1$ as in the proof of Theorem~\ref{theorem=Main}, in the sense of Remark~\ref{remark=LFNF-lifts}, then we have the following result.

\begin{theorem}\label{theorem=2-generated}
Given a countable residually finite group $H$, there exist a compact group $\tilde{K}=\tilde{K}^{(H)}$ and $\tilde{w}_1,\tilde{w}_2,\tilde{u} \in \tilde{K}$ such that the following hold true: for $\tilde{\Lambda}_1=\langle \tilde{w}_1,\tilde{u}\rangle$ and $\tilde{\Lambda}_2=\langle \tilde{w}_2,\tilde{u}\rangle$, \begin{itemize}
  \item $\tilde{\Lambda}_1$ and $\tilde{\Lambda}_2$ are both dense in $\tilde{K}$;
  \item $\tilde{\Lambda}_1$ is amenable $($with finite asymptotic dimension$)$;
  \item $\tilde{\Lambda}_2$ has property $(\mathrm{T})$; and 
  \item $\tilde{\Lambda}_2$ contains  an isomorphic copy of $H$.
\end{itemize}
\end{theorem}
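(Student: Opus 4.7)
The plan is to derive Theorem~\ref{theorem=2-generated} from the construction of Theorem~\ref{theorem=Main} by enlarging each finite factor $G_m^{U_m}$ to a wreath product with a fixed finite cyclic group, so that a common cyclic-shift element $\tilde u$ can be paired with one ``multi-slot'' element to generate the amenable side on one hand and the property $(\mathrm{T})$ side on the other. Concretely, fix an integer $N\geq 9$, and for each $m\in\mathbb{N}$ replace $G_m^{U_m}$ by
\[
\tilde G_m = G_m^{U_m}\wr(\mathbb{Z}/N\mathbb{Z}) = (G_m^{U_m})^N \rtimes \mathbb{Z}/N\mathbb{Z}.
\]
Set $\tilde K^{(H)}=\prod_{m\in\mathbb{N}}\tilde G_m$. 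Let $\tilde u\in\tilde K^{(H)}$ be the diagonal element whose $m$-th coordinate is $(1;\sigma)\in(G_m^{U_m})^N\rtimes\mathbb{Z}/N\mathbb{Z}$, with $\sigma$ the cyclic-shift generator of $\mathbb{Z}/N\mathbb{Z}$; this $\tilde u$ is shared between the amenable and property-$(\mathrm{T})$ sides.

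I would then take $\tilde w_1$ and $\tilde w_2$ to be the diagonal elements whose $m$-th coordinates place, respectively, the two amenable markers $(t'_1{}^{(m)},t'_2{}^{(m)})$ from the proof of Theorem~\ref{theorem=Main} at two widely-spaced slots of $(G_m^{U_m})^N$, and the nine property-$(\mathrm{T})$ markers $(t_1^{(m)},\ldots,t_9^{(m)})$ at nine widely-spaced slots, in both cases with trivial $\mathbb{Z}/N\mathbb{Z}$ part. I would identify the Cayley limit of the pair $((\tilde G_m;\tilde w_{1,m},\tilde u_m))_m$ as a $2$-marked quotient of $\Lambda_1\wr(\mathbb{Z}/N\mathbb{Z})$; since $\Lambda_1$ has asymptotic dimension $1$ by Remark~\ref{remark=LFNF-lifts}, and the wreath product of an amenable group of finite asymptotic dimension by a finite group is again amenable of finite asymptotic dimension (at most $N$), we obtain the desired $\tilde\Lambda_1$. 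Similarly, the Cayley limit of $((\tilde G_m;\tilde w_{2,m},\tilde u_m))_m$ is a $2$-marked quotient of $\mathrm{E}(n,\mathbb{F}_p[F])\wr(\mathbb{Z}/N\mathbb{Z})$. The latter has property $(\mathrm{T})$ by Theorem~\ref{theorem=ErshovJaikin} together with the stability of $(\mathrm{T})$ under direct powers and extensions by finite groups, and then $\tilde\Lambda_2$ has $(\mathrm{T})$ as a marked quotient. The containment $H\leqslant\tilde\Lambda_2$ would follow by projecting $\tilde\Lambda_2$ to a single slot of the wreath and recovering $\Lambda_2\supseteq H$ from Theorem~\ref{theorem=Main}, and density of both $\tilde\Lambda_1$ and $\tilde\Lambda_2$ in $\tilde K^{(H)}$ would follow from a Goursat-type argument (Lemma~\ref{lemma=Goursat}), using that the simple factors of $\tilde G_m$ are inherited from those of $G_m^{U_m}$ and remain distinct across $m$.

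The main obstacle I expect is verifying that the two-marking $(\tilde w_2,\tilde u)$ of the wreath product $\mathrm{E}(n,\mathbb{F}_p[F])\wr(\mathbb{Z}/N\mathbb{Z})$ generates a subgroup whose one-slot projection is all of $\mathrm{E}(n,\mathbb{F}_p[F])$. This is a variant of the classical question of how to $2$-generate a wreath product $G\wr A$ by a single multi-slot element and the shift; the correct choice of slot spacing, together with suitable products of $\tilde u$-conjugates and commutators in the spirit of Neumann-style $2$-generations of $G\wr\mathbb{Z}$ for finitely generated $G$, is the technical heart of the argument. A secondary point is that the enlargement of $\Lambda_1$ into $\Lambda_1\wr(\mathbb{Z}/N\mathbb{Z})$ is in the spirit of the modifications discussed in Remark~\ref{remark=LFNF-lifts}, so this realizes the ``enlarged $\Lambda_1$'' alluded to in the prelude to the statement.
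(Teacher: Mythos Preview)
Your overall architecture---pass to wreath products with a fixed finite cyclic group and use a common shift element $\tilde u$---is exactly what the paper does. However, there are two genuine gaps.

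\textbf{The compact group is wrong, so density fails.} You set $\tilde K^{(H)}=\prod_m\tilde G_m=\prod_m\bigl(G_m^{U_m}\wr(\mathbb{Z}/N\mathbb{Z})\bigr)$ and invoke Lemma~\ref{lemma=Goursat}. But every $\tilde G_m$ surjects onto $\mathbb{Z}/N\mathbb{Z}$, so the same cyclic simple quotient occurs for all $m$ and the hypothesis of Lemma~\ref{lemma=Goursat} is violated. Concretely, the projection of $\langle\tilde w_i,\tilde u\rangle$ to $\prod_m(\mathbb{Z}/N\mathbb{Z})$ lands in the diagonal copy of $\mathbb{Z}/N\mathbb{Z}$, which is a proper closed subgroup; hence $\tilde\Lambda_i$ is not dense in your $\tilde K^{(H)}$. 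The paper avoids this by taking a \emph{single} wreathing:
\[
\tilde K \;=\; K^{(H)}\wr(\mathbb{Z}/N\mathbb{Z}) \;=\; \Bigl(\prod_m (G_m^{U_m})^{N}\Bigr)\rtimes(\mathbb{Z}/N\mathbb{Z}),
\]
with one global copy of $\mathbb{Z}/N\mathbb{Z}$ acting diagonally (here $N=2^{20}$).

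\textbf{Placing the $t_j^{(m)}$ directly does not give a $2$-generating set.} You correctly identify this as the technical heart, but the scheme you describe---put $t_1^{(m)},\ldots,t_9^{(m)}$ themselves at widely spaced slots and take commutators of $\tilde u$-conjugates---cannot isolate a single $t_j^{(m)}$: whenever the shifted supports meet at a slot you obtain $[t_i^{(m)},t_k^{(m)}]$ there, not $t_j^{(m)}$. (And $N\geq 9$ is far too small in any case.) The paper's missing ingredient is a preliminary step inside $\mathrm{E}(n,\mathbb{F}_p[F])$: each $t_j$ is written as a \emph{single commutator} $t_j=[c_j,d_j]$ (using $e_{1,2}^r=[e_{1,3}^r,e_{3,2}^1]$ for the elementary generators and Ore's theorem for the permutation-matrix generator). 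One then places $c_j$ at slot $2^j$ and $d_j$ at slot $2^{j+9}$, with $N=2^{20}$. The Hall-type spacing guarantees that the supports of $u^{2^j}w_2u^{-2^j}$ and $u^{2^{j+9}}w_2u^{-2^{j+9}}$ meet only at slot $0$, so their commutator is $[c_j,d_j]=t_j$ supported at $0$ alone. This is precisely what recovers the full $9$-marking at a single slot and yields the desired quotient of $\mathrm{E}(n,\mathbb{F}_p[F])\wr(\mathbb{Z}/N\mathbb{Z})$.

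A smaller point on the amenable side: the paper does not use spacing for $\tilde w_1$ at all. It arranges $n$ odd with $p\nmid n$ so that $t_1'^{(m)}$ has order $p$ while $t_2'^{(m)}$ has order $n(\#L_m-2)$, coprime to $p$; then suitable powers of $\tilde w_1$ already isolate each of the two markers. Your ``two widely spaced slots'' would run into the same extraction problem as above.
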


The construction uses \textit{wreath products} (with finitely many coordinates): for a (possibly topological) group $\mathcal{L}$ and for a \textit{finite} group $A$, set 
\[
\mathcal{L}\wr A=\left(\bigoplus_{A} \mathcal{L}\right)\rtimes A, 
\]
which is endowed with the natural topology induced by that on $\mathcal{L}^A \times A$. Here the $A$-action in the right hand side of the equality above is given by permutation of coordinates by right multiplication. An element in $\bigoplus_{A} \mathcal{L}$ is naturally identified with a map $f\colon A\to \mathcal{L}$; we write the constant map taking the value in $\{e_{\mathcal{L}}\}$ as $\mathbf{e}$.

\begin{proof}
Fix an odd prime $p$. Fix odd $n\in \mathbb{N}_{\geq 5}\setminus p\mathbb{Z}$. Take the construction of $(G_m^{U_m})_{m\in \mathbb{N}}$, $(t'_1{}^{(m)},t'_2{}^{(m)})$, $(\mathrm{E}(n,\mathbb{F}_p[F]);t_1,\ldots ,t_9)$, $\psi_m\colon \mathrm{E}(n,\mathbb{F}_p[F])\twoheadrightarrow G_m^{U_m}$, $\Lambda_1$, $\Lambda_2$ and   $K=K^{(H)}$ as in the proof of Theorem~\ref{theorem=Main}. 

We claim that for each $m\in \mathbb{N}$, for every $j\in [9]$, $t_j\in \mathrm{E}(n,\mathbb{F}_p[F])$ may be written as a single commutator. Indeed, by $(\#)$, for every $r\in R$, it holds that $e_{1,2}^r=[e_{1,3}^r,e_{3,2}^1]$;
for the permutation matrix $\beta$, make use of the fact that every element in $\mathrm{Alt}(n)$ is a single commutator  (\cite{Ore}). Hence, for each $j\in [9]$, we have $c_j,d_j\in \mathrm{E}(n,\mathbb{F}_p[F])$ such that $[c_j,d_j]=t_j$. 

For every $m\in \mathbb{N}$, set $w_1^{(m)}, w_2^{(m)},u^{(m)} \in G_m^{U_m}\wr (\mathbb{Z}/2^{20}\mathbb{Z})$ as 
\[
w_1^{(m)}=(f_1^{(m)},0),\quad w_2^{(m)}=(f_2^{(m)},0) \quad \textrm{and} \quad u=(\mathbf{e},1).
\]
Here $f_1^{(m)},f_2^{(m)}\in \bigoplus_{\mathbb{Z}/2^{20}\mathbb{Z}} G_m^{U_m}$ are, respectively, defined by for every $l\in \mathbb{Z}/2^{20}\mathbb{Z}$,
\[
f_1^{(m)}(l)=\left\{\begin{array}{cl}
t'_1{}^{(m)}, & \textrm{$l=0$},\\
t'_2{}^{(m)}, & \textrm{$l=1$},\\
e_{G_m^{U_m}} , & \mathrm{otherwise}
\end{array}\right. \quad \textrm{and} \quad 
f_2^{(m)}(l)=\left\{\begin{array}{cl}
\psi_m(c_j), & \textrm{$l=2^j$,\ $j\in [9]$},\\
\psi_m(d_j), & \textrm{$l=2^{j+9}$,\ $j\in [9]$},\\
e_{G_m^{U_m}} , & \mathrm{otherwise}
\end{array}\right. .
\]
We claim that $(w_2^{(m)},u^{(m)})$ is a $2$-marking of $G_m^{U_m}\wr (\mathbb{Z}/2^{20}\mathbb{Z})$. Indeed, by commutator calculus, it follows that for every $j\in [9]$,
\[
[u^{2^j}w_2^{(m)}u^{-2^j},u^{2^{j+9}}w_2^{(m)}u^{-2^{j+9}}]=(g_j^{(m)},0),
\]
where for $l\in \mathbb{Z}/2^{20}\mathbb{Z}$,
\[
g_j^{(m)}(l)=\left\{\begin{array}{cl}
[\psi_m(c_j),\psi_m(d_j)]\quad(=\psi_m(t_j)), & \textrm{$l=0$},\\
e_{G_m^{U_m}} , & \mathrm{otherwise}.
\end{array}\right. 
\]
This is a Hall-type argument \cite[1.5]{Hall}; see \cite[the proof of Lemma~4.8]{MimuraRF} for more details. 

It is easier to see that $(w_1^{(m)},u^{(m)})$ is a $2$-marking of $G_m^{U_m}\wr (\mathbb{Z}/2^{20}\mathbb{Z})$. Indeed, the order of $t'_1{}^{(m)}$ is $p$ and that of $t'_2{}^{(m)}$ is $n(\# L_m-2)$. Recall that $p\mid \# L_m$, $p\ne 2$ and that $n\not \in p\mathbb{N}$. It then follows that these two numbers, $p$ and $n(\# L_m-2)$, are coprime. Hence we may extract, respectively, $t'_1{}^{(m)}$ and $t'_2{}^{(m)}$ from $f_1^{(m)}$ by taking, respectively, appropriate powers of $w_1^{(m)}$.

Finally, set $\tilde{K}=\tilde{K}^{(H)}$ by
\[
\tilde{K}=K\wr (\mathbb{Z}/2^{20}\mathbb{Z})\quad \left(= (\prod_{m\in \mathbb{N}}(\bigoplus_{\mathbb{Z}/2^{20}\mathbb{Z}}G_m^{U_m}))\rtimes (\mathbb{Z}/2^{20}\mathbb{Z})\right).
\]
Here we have the identification of the second equality above by letting $\mathbb{Z}/2^{20}\mathbb{Z}$ act on $\prod_{m\in \mathbb{N}}(\bigoplus_{\mathbb{Z}/2^{20}\mathbb{Z}}G_m^{U_m})$ coordinatewise for each $m\in \mathbb{N}$.
Let 
\begin{align*}
\tilde{w}_1&=((f_1^{(0)},f_1^{(1)},\ldots ,f_1^{(m)},\ldots ),0) ,\\
\tilde{w}_2&=((f_2^{(0)},f_2^{(1)},\ldots ,f_2^{(m)},\ldots ),0) ,\\
\textrm{and}\quad \quad\tilde{u}&=(\mathbf{e},1) 
\end{align*}
be elements in $\tilde{K}$ (via the identification above for $\tilde{w}_1$ and $\tilde{w}_2$). Then, in a similar way to one in the proof of Theorem~\ref{theorem=Main}, it may be proved that these $\tilde{K}$, $\tilde{w}_1$, $\tilde{w}_2$ and $\tilde{u}$ fulfill all of the four assertions as in Theorem~\ref{theorem=2-generated}. More precisely, to prove the third assertion (property $(\mathrm{T})$ for $\tilde{\Lambda}_2$), observe that $\tilde{\Lambda}_2$ is a group quotient of $\mathrm{E}(n,\mathbb{F}_p[F])\wr (\mathbb{Z}/2^{20}\mathbb{Z})$. Since property $(\mathrm{T})$ is closed under taking extensions, the wreath product above has property $(\mathrm{T})$; recall that finite groups have property $(\mathrm{T})$. To see that $\tilde{\Lambda}_1$ has finite asymptotic dimension, observe that $\tilde{\Lambda}_1$ is a subgroup of $\Lambda_1\wr (\mathbb{Z}/2^{20}\mathbb{Z})$, where $\Lambda_1$ is as in the proof of Theorem~\ref{theorem=Main}; recall Remark~\ref{remark=LFNF-lifts}. (Hence, in our construction $\tilde{\Lambda}_1$ is elementary amenable, as well as the group $\Lambda_1$ above.)
\end{proof}
\end{remark}

The following strengthening of Theorem~\ref{theorem=Common} may be derived from a similar argument to one in the proof of Theorem~\ref{theorem=2-generated}. We leave the proof of it to the reader. More precisely, to apply a Hall-type argument to $\tilde{\Sigma}_1$, note that for every $m\in \mathbb{N}$, $l_m$ is coprime to $p$. Here $p$ is a fixed odd prime and $(l_m)_{m\in \mathbb{N}}$, associated with $p$  and a countable residually finite group $H$, is as in the proof of Theorem~\ref{theorem=Main}.

\begin{theorem}[Variety of \textit{$2$-generated} dense subgroups of a \textit{common} compact group]\label{theorem=2-generatedCommon}
Let $p$ be an odd prime. Let $n\in \mathbb{N}_{\geq 6}$ be an even number such that $p \nmid n$. Then, there exist a compact group
\[
\tilde{\mathcal{K}}(=\tilde{\mathcal{K}}_{n,p})=\left( \prod_{i\in \mathbb{N}_{\geq 1}\setminus p\mathbb{Z}} \mathrm{SL}(ni,\mathbb{F}_p)\right)\wr (\mathbb{Z}/2^{30}\mathbb{Z})
\]
and $\tilde{\omega}_{1},\tilde{\upsilon}\in \tilde{\mathcal{K}}$ such that the following hold ture:
\begin{enumerate}[$(1)$]
  \item The group $\tilde{\Sigma}_1=\langle \tilde{\omega}_{1},\tilde{\upsilon}\rangle$ is dense in $\tilde{\mathcal{K}}$. It is amenable and it has finite asymptotic dimension.
  \item For every countable and residually finite group $H$, there exists $\tilde{\omega}_{2}=\tilde{\omega}_{2}^{(H)}$ such that the group $\tilde{\Sigma}_2=\tilde{\Sigma}_2^{(H)}=\langle \tilde{\omega}_{2}^{(H)},\tilde{\upsilon}\rangle$ fulfills the following. The group $\tilde{\Sigma}_2^{(H)}$ is dense in $\tilde{\mathcal{K}}$, it has property $(\mathrm{T})$ and it contains an isomorphic copy of $H$.
\end{enumerate}
\end{theorem}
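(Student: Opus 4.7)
The plan is to combine the ideas in the proofs of Theorem~\ref{theorem=Common} and Theorem~\ref{theorem=2-generated}. Fix $p$ and even $n\in\mathbb{N}_{\geq 6}$ with $p\nmid n$. For each $i\in\mathbb{N}_{\geq 1}\setminus p\mathbb{Z}$ write $Q_i=\mathrm{SL}(ni,\mathbb{F}_p)$, and recall from the proof of Theorem~\ref{theorem=Common} both the $2$-marking $(\alpha_1^{(i)},\alpha_2^{(i)})$ of $Q_i$ (for the amenable side) and the $13$-marking $(\tau_1^{(i)},\ldots,\tau_{13}^{(i)})$ of $Q_i$ (for the property~$(\mathrm{T})$ side). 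Set $\tilde{\upsilon}=(\mathbf{e},1)\in\tilde{\mathcal{K}}$.

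For assertion (1), I would place the two amenable generators at positions $0$ and $1$ of the wreath factor: define $\tilde{\omega}_1=(F_1,0)$ where $F_1\colon\mathbb{Z}/2^{30}\mathbb{Z}\to\prod_i Q_i$ has $F_1(0)=(\alpha_1^{(i)})_i$, $F_1(1)=(\alpha_2^{(i)})_i$, and $F_1(\ell)=\mathbf{e}$ otherwise. The key observation is that $\alpha_1^{(i)}$ has order $p$ while $\alpha_2^{(i)}$ has order $2ni$, and the standing hypotheses $p$ odd, $p\nmid n$, $p\nmid i$ force $\gcd(p,2ni)=1$. Given any finite collection of coordinates $i_1,\ldots,i_M$, the Chinese Remainder Theorem then produces powers $P_1,P_2\in\mathbb{N}$ for which $\tilde{\omega}_1^{P_1}$ isolates $\alpha_1^{(i_k)}$ at position $0$ (with identity at position $1$) in each selected coordinate, and $\tilde{\omega}_1^{P_2}$ isolates $\alpha_2^{(i_k)}$ at position $1$; combined with conjugations by powers of $\tilde{\upsilon}$ and a Goursat-type adaptation of Lemma~\ref{lemma=Goursat} to the wreath setting, this yields density of $\tilde{\Sigma}_1$ in $\tilde{\mathcal{K}}$. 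Amenability and finite asymptotic dimension follow because $\tilde{\Sigma}_1$ sits inside $\Sigma_1\wr(\mathbb{Z}/2^{30}\mathbb{Z})$, with $\Sigma_1$ the amenable dense subgroup supplied by Theorem~\ref{theorem=Common} (recall Remark~\ref{remark=LFNF-lifts}).

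For assertion (2), I would apply the Hall commutator trick exactly as in the proof of Theorem~\ref{theorem=2-generated}. Using relation $(\#)$ together with Ore's theorem on commutators in alternating groups, write each $\tau_j^{(i)}$ as a single commutator $[c_j^{(i)},d_j^{(i)}]$ in $Q_i$, and set $\tilde{\omega}_2^{(H)}=(F_2,0)$ with $F_2(2^j)=(c_j^{(i)})_i$ and $F_2(2^{j+13})=(d_j^{(i)})_i$ for $j\in[13]$, identity elsewhere; the exponent $2^{30}$ leaves ample room. Then each commutator
\[
[\tilde{\upsilon}^{2^j}\tilde{\omega}_2^{(H)}\tilde{\upsilon}^{-2^j},\,\tilde{\upsilon}^{2^{j+13}}\tilde{\omega}_2^{(H)}\tilde{\upsilon}^{-2^{j+13}}]
\]
is supported at a single position of the wreath factor with value $(\tau_j^{(i)})_i$, so $\tilde{\Sigma}_2^{(H)}$ contains an isomorphic copy of $\Sigma_2^{(H)}$ from Theorem~\ref{theorem=Common}, and hence of $H$. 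Density follows exactly as in assertion (1). Property~$(\mathrm{T})$ is then obtained by exhibiting $\tilde{\Sigma}_2^{(H)}$ as a marked group quotient of the universal wreath product $(\mathrm{E}(n,\mathbb{F}_p[F])\times\mathrm{E}(n,\mathbb{F}_p\langle y,z\rangle))\wr(\mathbb{Z}/2^{30}\mathbb{Z})$ via the combined universal maps onto $\prod_i Q_i$; this wreath product has property~$(\mathrm{T})$ by Theorem~\ref{theorem=ErshovJaikin} as a finite extension of a finite direct product of property~$(\mathrm{T})$ groups, and property~$(\mathrm{T})$ passes to quotients.

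The main obstacle I foresee is the Goursat-style verification of density inside a wreath product: Lemma~\ref{lemma=Goursat} as stated handles bare products of the $Q_i$, and one must confirm that the Hall-type extractions combined with $\tilde{\upsilon}$-shifts really do generate a dense subgroup of $(\prod_i Q_i)\wr(\mathbb{Z}/2^{30}\mathbb{Z})$ rather than some large subgroup missing a diagonal simple quotient (the $\mathrm{PSL}(ni,\mathbb{F}_p)$ remain pairwise non-isomorphic, and the only additional simple quotient introduced by the wreath factor is $\mathbb{Z}/2\mathbb{Z}$, which must be controlled via the abelianization). The coprimality $\gcd(p,2ni)=1$ is precisely what makes the Hall extraction uniform across arbitrarily many coordinates via CRT, which accounts for the restriction to $i\in\mathbb{N}_{\geq 1}\setminus p\mathbb{Z}$ in the definition of $\tilde{\mathcal{K}}$.
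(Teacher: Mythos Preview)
Your proposal is correct and follows the route the paper intends: the paper does not give a detailed proof but simply says the result ``may be derived from a similar argument to one in the proof of Theorem~\ref{theorem=2-generated}'' and leaves it to the reader, so your combination of the wreath/Hall machinery of Theorem~\ref{theorem=2-generated} with the $13$-marking of Theorem~\ref{theorem=Common} is exactly what is asked for.

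Two small points are worth tightening. First, the one explicit hint the paper does give---``note that for every $m\in\mathbb{N}$, $l_m$ is coprime to $p$''---is something you use implicitly but never state: since $l_m=2p\cdot\#(\tilde{H}/\tilde{N}_m)-2\equiv -2\pmod p$ and $p$ is odd, each $l_m$ lies in $\mathbb{N}_{\geq 1}\setminus p\mathbb{Z}$, so restricting the index set to $i\notin p\mathbb{Z}$ (which you correctly identify as necessary for the coprimality extraction on the amenable side) does not discard any of the coordinates carrying the copy of $H$. Second, your appeal to ``Ore's theorem on commutators in alternating groups'' for expressing $b_4$ (and hence $\tau_{13}^{(i)}$) as a single commutator is not quite the right citation here: since $n$ is even, $b_4$ carries a $-1$ entry and is not a permutation matrix. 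It is still a single commutator---for instance because every element of $\mathrm{SL}(n,\mathbb{F}_p)$ is, for $n\geq 3$---so the argument goes through, but the justification should be adjusted.

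Your worry about the Goursat step in the wreath setting is well placed but resolvable along the lines you sketch: the CRT extraction produces, after projection to any finite set $I$ of coordinates, elements supported at a single position $a\in\mathbb{Z}/2^{30}\mathbb{Z}$ with value $(\alpha_1^{(i)})_{i\in I}$ or $(\alpha_2^{(i)})_{i\in I}$; Lemma~\ref{lemma=Goursat} applied to that finite product then gives all of $\prod_{i\in I}Q_i$ at each position separately, and together with $\tilde{\upsilon}$ this surjects onto $\bigl(\prod_{i\in I}Q_i\bigr)\wr(\mathbb{Z}/2^{30}\mathbb{Z})$, which suffices for density.
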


\section*{Acknowledgments}
Part of this work has been done during the  two-year stay of the author in the \'{E}cole Polytechnique F\'{e}d\'{e}rale de Lausanne supported by Grant-in-Aid for JSPS Oversea Research Fellowships. The author wishes to express his gratitude to Professor Nicolas Monod and Mrs. Marcia Gouffon at the EPFL for their hospitality and help on his stay. He is grateful to V\'{i}ctor Hugo Ya\~{n}ez, Motoko Kato, Shin-ichi Oguni, Tatsuya Ohshita, Yoshinori Yamasaki and  Takamitsu Yamauchi  for discussions during his stay in Ehime University. He thanks Akihiro Munemasa and Sven Raum for comments. The author is supported in part by JSPS KAKENHI Grant Number JP17H04822.

\bibliographystyle{amsalpha}
\bibliography{rft_f.bib}

\end{document}